\newtheorem{thm}{Theorem}[section]
\newtheorem{cor}[thm]{Corollary}
\newtheorem{lem}[thm]{Lemma}
\newtheorem{prop}[thm]{Proposition}
\newtheorem{theoremalpha}{Theorem}
\theoremstyle{definition}
\theoremstyle{remark}
\newtheorem{rem}[thm]{Remark}
\numberwithin{equation}{section}
\begin{document}
\title[The splitting method for the nonlinear Schr\"odinger equation]
{On the splitting method for the nonlinear Schr\"odinger equation with initial data in $H^1$ }
\author[Choi]{Woocheol Choi}
\address{Department of Mathematics Education, Incheon National University, Incheon 22012, Republic of Korea}
\email{choiwc@inu.ac.kr}

\author[Koh]{Youngwoo Koh}
\address{Department of Mathematics Education, Kongju National University, Kongju 32588, Republic of Korea}
\email{ywkoh@kongju.ac.kr}

\subjclass[2010]{Primary 35Q55, 65M15.}
\keywords{Nonlinear Schr\"odinger equations, Splitting method}
%\thanks{The first author is thankful for the financial support from the POSCO TJ Park Foundation.}
\maketitle
\begin{abstract}
In this paper, we establish a convergence result for the operator splitting scheme $Z_{\tau}$ introduced by Ignat \cite{I}, with initial data in $H^1$, for the nonlinear Schr\"odinger equation:
    $$
    \partial_t u = i \Delta u + i\lambda |u|^{p} u,\qquad u (x,0) =\phi (x),
    $$
where $p >0$, $\lambda \in \{-1,1\}$ and $(x,t) \in \mathbb{R}^d \times [0,\infty)$.
We prove the $L^2$ convergence of order $\mathcal{O}(\tau^{1/2})$ for the scheme with initial data in the space $H^1 (\mathbb{R}^d)$ for the energy-subcritical range of $p$.
\end{abstract}

%%%%%%%%%%%%%%%%%%%%%%%%%%%%%%%%%%%%%%%%%%%%%%%%%%%%%%%%%%%%%%%%%%%%%%%%%%%%%%%%%%%%%%%%%%%%%

\section{Introduction}

Consider the following Cauchy problem of the nonlinear Schr\"odinger equation in $\mathbb{R}^{d+1}$:
    \begin{equation}\label{eq-main}
    \left\{\begin{array}{ll}
    \partial_t u = i \Delta u + i \lambda |u|^p u, \quad (x,t) \in \mathbb{R}^d \times \mathbb{R}, \\
    u(x,0)= \phi (x),
    \end{array} \right.
    \end{equation}
where $\lambda \in \{-1,1\}$. Nonlinear Schr\"odinger equations appear in various models of quantum mechanics (see, e.g., \cite{Ca, SS, T}). In this paper, we are concerned with operator splitting schemes, which are useful for the numerical computation of semilinear-type equations \eqref{eq-main}. The idea of such schemes is to divide the problem \eqref{eq-main} into a linear flow and a nonlinear flow, as described below.

We define $N(t) \phi$ as the solution of the flow
    $$
    \left\{ \begin{aligned}  \partial_t u & = i \lambda |u|^p u,& x \in \mathbb{R}^d,~ t >0,
    \\
    u(x,0)&= \phi (x),& x \in \mathbb{R}^d,
    \end{aligned}
    \right.
    $$
that is, $N(t) \phi = \exp (i t \lambda |\phi|^p) \phi$. On the other hand, we set $S(t) \phi$ as the solution of the linear Schr\"odinger propagation
    $$
    \left\{ \begin{aligned}  \partial_t u & = i\Delta u, & x \in \mathbb{R}^d,~ t >0,
    \\
    u(x,0)&= \phi (x),& x \in \mathbb{R}^d,
    \end{aligned}
    \right.
    $$
which admits the Fourier multiplier formula $S(t) \phi = e^{it \Delta} \phi$. Then we split the flow of \eqref{eq-main} into the flows $N(t)$ and $S(t)$ with a small switching time. Namely, for a fixed time interval $[0,T]$ and a small value $\tau>0$, we can consider the Lie approximation
    $$
    Z(n \tau) \phi = \big( S(\tau) N(\tau) \big)^n \phi,\quad 0 \leq n \tau \leq T,
    $$
or the Strang approximation
    $$
    Z(n \tau) \phi = \big( S(\tau/2) N(\tau)S(\tau/2) \big)^n \phi,\quad 0 \leq n \tau \leq T.
    $$
The convergence of these two schemes has been studied by Besse \emph{et al}. \cite{BBD} for globally Lipschitz continuous nonlinearities and by Lubich \cite{L} for Schr\"odinger-Poisson and cubic NLS equations with initial data in the space $H^4 (\mathbb{R}^3)$. On the other hand, Ignat and Zuazua \cite{IZ1, IZ2} and Ignat \cite{I2} developed various numerical schemes for which they proved Strichartz type estimates to obtain the convergence of the schemes with initial data of low regularity. Also, Ignat \cite{I} introduced the following modified version of the splitting scheme:
    \begin{equation}\label{eq-r-30}
    Z_{\tau} (n\tau) = \big( S_{\tau} (\tau) N(\tau) \big)^n \Pi_{\tau}\phi.
    \end{equation}
Here, $S_{\tau}(t)$ denotes the frequency localized Schr\"odinger flow given by
    $$
    S_{\tau}(t) \phi = S(t) \Pi_{\tau} \phi,
    $$
where
    \begin{equation}\label{eq-1-10}
    \widehat{\Pi_{\tau} \phi} (\xi)
    = \chi (\tau^{1/2}\xi) \widehat{\phi}(\xi) ,\quad \xi \in \mathbb{R}^d,
    \end{equation}
and $\chi \in C^{N}(\mathbb{R}^d)$ is a cut-off function supported in $B^d (0,2)$ such that $\chi \equiv 1$ on $B^d (0,1)$, where $N \in \mathbb{N}$ is some large number.  In fact, it is sufficient to set $N=2d$.
\

The aim of this paper is to determine an improved estimate for the splitting scheme $Z_{\tau}(n\tau)$. In particular, we prove a convergence result for $p$ in the energy-subcritical range when the initial data $\phi$ belongs to the space $H^1 (\mathbb{R}^d)$. Before stating our result, we recall some previous results.
\begin{itemize}
\item (Lubich \cite{L})
Let $d=3$ and $p=2$. Suppose that $\phi \in H^4 (\mathbb{R}^3)$, and consider a time $T>0$ such that $\sup_{0 \leq t \leq T} \|u(t)\|_{H^4 (\mathbb{R}^3)} < \infty$. Then,
the approximation $Z$ satisfies
    $$
    \max_{0 \leq n \tau \leq T} \|Z(n \tau)- u (n\tau)\|_{L^2 (\mathbb{R}^3)} \leq \tau^{2}  C(T, \phi)
    $$
and
    $$
    \max_{0 \leq n \tau \leq T} \|Z(n \tau)- u (n\tau)\|_{H^2 (\mathbb{R}^3)} \leq \tau\,  C(T,\phi).
    $$
\item (Ignat \cite{I})
Let $1\leq d\leq3$ and $1\leq p < \frac{4}{d}$. For any $\phi \in H^2 (\mathbb{R}^d)$ and any time $T>0$,
the approximation $Z_\tau$ satisfies
    $$
    \max_{0 \leq n \tau \leq T} \|Z_{\tau}(n \tau)- u (n\tau)\|_{L^2 (\mathbb{R}^d)} \leq \tau\,  C(d,p,T,\|\phi\|_{H^2}).
    $$
\end{itemize}
%
%Comparing with previous results, we provide an improved one.
In the following result, we provide the convergence result for initial data in $H^1 (\mathbb{R}^d)$.
\begin{thm}\label{thm-2}
Let $d \geq 1$ and $0< p < \frac{4}{d}$. For any $\phi \in H^1 (\mathbb{R}^d)$ and any time $T>0$,
the approximation $Z_\tau$ satisfies
    $$
    \max_{0 \leq n \tau \leq T} \big\| Z_{\tau}(n \tau)- u (n\tau) \big\|_{L^2 (\mathbb{R}^d)} \leq \tau^{1/2}  C(d,p,T, \|\phi\|_{H^1})
    $$
for any $\tau \in (0,1)$.
\end{thm}

\begin{thm}\label{thm-2'}
Let $1\leq d <6$ and $1\leq p < p_d$.
Suppose that $\phi \in H^1 (\mathbb{R}^d)$, and consider a time $T>0$ such that $\sup_{0 \leq t \leq T} \|u(t)\|_{H^1 (\mathbb{R}^d)} < \infty$.
Then, the approximation $Z_\tau$ satisfies
    $$
    \max_{0 \leq n \tau \leq T} \big\| Z_{\tau}(n \tau)- u (n\tau) \big\|_{L^2 (\mathbb{R}^d)} \leq \tau^{1/2} C(d,p,T, \phi)
    $$
for any $\tau \in (0,1)$. Here, $p_d=\infty$ if $d=1,2$ and $p_d= \frac{4}{d-2}$ if $d\geq3$.
\end{thm}

\begin{rem}
In fact, we can find an upper bound of $C(d,p,T, \|\phi\|_{H^1})$ in Theorem \ref{thm-2} such as
    $$
    C(d,p,T, \|\phi\|_{H^1})
    \leq \exp\Big( \exp\big( C_{d,p} T^{c_1(d,p)} \|\phi\|_{H^1}^{c_2(d,p)} \big) \Big)
    $$
for some constants $C_{d,p}$, $c_1(d,p)$, $c_2(d,p)>0$.
%Even though this bound seems to be quite large, it may be useful how to take $\tau>0$ safely depending on given data $d,p,T$ and $\|\phi\|_{H^1}$ in the numerical computation.
\end{rem}

In order to obtain a convergence result with the low regularity assumption, Strichartz-type estimates are employed in \cite{I} along with the Duhamel-type formula for $Z_{\tau}$, given by
    \begin{equation}\label{eq-1-9}
    Z_{\tau}(n \tau) = S_{\tau}(n \tau) \phi + \tau \sum_{k=0}^{n-1} S_{\tau}(n\tau -k \tau) \frac{N(\tau) - I}{\tau} Z_{\tau}(k \tau),\quad n \geq 1,
    \end{equation}
which is compared to the Duhamel formula of the solution $u$ to \eqref{eq-main}, expressed as
    \begin{equation}\label{eq-1-8}
    u(t) = S(t) \phi + i \lambda \int_0^t S(t-s) |u|^p u (s) ds,\quad t \geq 0.
    \end{equation}
A key ingredient of the convergence analysis in \cite{I} is to obtain the stability (uniformly in $\tau \in (0,1)$) of the scheme $Z_{\tau}$ in the discrete space
$\ell^q (n \tau \in I;\, L^r (\mathbb{R}^d))$.

Here, we introduce a few notations. For any interval $I \subset [0,\infty)$, we define the space $\ell^q (n \tau \in I;\, L^r (\mathbb{R}^d))$ as
consisting of functions defined on $\tau \mathbb{Z} \cap I$ with values in $L^r (\mathbb{R}^d)$, the norm of which is given by
    $$
    \|u\|_{\ell^q (n\tau \in I;\, L^r (\mathbb{R}^d))}
    = \bigg( \tau \sum_{n\tau \in I} \|u(n \tau)\|_{L^r (\mathbb{R}^d)}^q \bigg)^{1/q}.
    $$
In the present work, we take into account the nonlinearity in the energy-subcritical range: $|u|^{p} u$ for $p \in (0, p_d)$, where $p_{d}$ is defined by
    $$
    p_{d} = \biggl\{\begin{array}{ll} \frac{4}{d-2} &\quad \textrm{if}~ d\geq3
    \\
    \infty &\quad \textrm{if}~d =1,2.
    \end{array}\biggr.
    $$
Also, a pair $(q,r) \in [2, \infty] \times [2,\infty]$ is called an admissible pair if
    $$
    \frac{2}{q} + \frac{d}{r} = \frac{d}{2}, \quad (q,r,d)\neq(2,\infty,2).
    $$
Lastly, we always denote $(q_0, r_0)$ be the admissible pair $(q_0, r_0)=(\frac{4(p+2)}{dp}, p+2)$.\\

The well-posedness theory on \eqref{eq-main} for $\phi \in H^1 (\mathbb{R}^d)$ is well understood as follows.

\begin{theoremalpha}\label{wp_u_H12}
[See, e.g., \cite{Ca}.] Let $d \geq 1$ and $0 < p < p_{d}$, and suppose that $\phi \in H^1 (\mathbb{R}^d)$. Then, there is a time $T_{max}=T(d,p,\phi) \in (0,\infty]$ such that a solution $u \in C\left([0,T_{max}); H^1 (\mathbb{R}^d)\right)$ to \eqref{eq-main} exists in the sense of the Duhamel formula \eqref{eq-1-8}.
Moreover, for any $T<T_{max}$ and any admissible pairs $(q,r)$, there is a positive constant $M_1 =M_1(d,p,T,\phi)>0$ such that
    \begin{equation}\label{def_C_1}
    \|u\|_{L^\infty ([0,T]; H^1)} + \|u\|_{L^q ([0,T]; W^{1,r})}
    \leq M_1.
    \end{equation}
In addition, one of the following is true:
    \begin{itemize}
    \item The solution $u$ exists globally, i.e., $T_{max}=\infty$ and $\sup_{t \in [0,\infty)} \|u(t)\|_{H^1 (\mathbb{R}^d)} < \infty.$
    \item The solution $u \in C\left([0,T_{max});\, H^1 (\mathbb{R}^d)\right)$ exists for a maximal time interval $[0, T_{max})$, and
        $$
        \lim_{t \rightarrow T_{max}} \|u(t)\|_{H^1 (\mathbb{R}^d)} = \infty.
        $$
    \end{itemize}
\end{theoremalpha}
%We shall denote $T_{max} = \infty$ if the solution $u$ to \eqref{eq-main} exists globally.
It is well known that if $0<p<\frac{4}{d}$, then we always have $T_{max} = \infty$, due to the mass conservation property. In this case, we can write $M_1 = C_{d,p} \|\phi\|_{H^1}$ in \eqref{def_C_1}.
Further, if the equation is defocusing, i.e., $\lambda =-1$ in \eqref{eq-main}, then the solution $u$ exists globally for $0 <p< p_d$ by the energy conservation law. We refer the reader to Sections 4 and 5 of Cazenave \cite{Ca} for the details.\\

Here, the theorem below is the one of main contribution of this paper.

\begin{thm}\label{thm-rel}
Let $d \geq 1$ and $0 < p < p_{d}$. Suppose that $\phi \in H^1 (\mathbb{R}^d)$, and consider a time $T>0$ such that $\sup_{0\leq t \leq T} \|u(t)\|_{H^1 (\mathbb{R}^d)} < \infty$.
Suppose that there is a constant $M_2 = C(d,p,T,\phi) >0$ such that the stability of $Z_\tau$
    \begin{equation}\label{eq-5-51}
    \| Z_{\tau}(n \tau)\|_{\ell^\infty (n\tau \in [0,T]; H^1 )} +
    \| Z_{\tau}(n \tau)\|_{\ell^{q_0} (n\tau \in [0,T]; W^{1,r_0} )}
    \leq M_2
    \end{equation}
holds for all $\tau \in (0,1)$.
Then we have the $L^2$ convergence between $u$ with $Z_\tau$
    \begin{equation}\label{rel-goal}
    \max_{0 \leq n \tau \leq T} \big\| Z_{\tau}(n \tau)- u (n\tau) \big\|_{L^2 (\mathbb{R}^d)} \leq \tau^{1/2} \exp\Big( C_{d,p} T \big( M_1 + M_2 \big)^{\frac{2p(p+2)}{4-(d-2)p}} \Big).
    \end{equation}
In the above, the constant $M_1 >0$ in \eqref{rel-goal} is referred to in \eqref{def_C_1}.
\end{thm}

By Theorem \ref{thm-rel}, it is enough to prove the stability \eqref{eq-5-51} of $Z_\tau$ in order to show the convergence of $Z_\tau$.  %Combining this with Theorem \ref{thm-9} and Theorem \ref{thm-9'} below, we obtain Theorem \ref{thm-2} and Theorem \ref{thm-2'}.
Namely, to prove Theorem \ref{thm-2} and Theorem \ref{thm-2'}, it is enough to obtain the global\,(-in-time) stability of $Z_\tau$ with $\phi \in H^1 (\mathbb{R}^d)$ in the space $\ell^{q} (n\tau \in [0,T]; W^{1,r} (\mathbb{R}^d))$ for any $T < T_{max}$. When $0< p <\frac{4}{d}$, the stability result on $Z_{\tau}$ with $\phi \in L^2 (\mathbb{R}^d)$ in the space $\ell^{q}(n\tau \in [0,T]; L^r (\mathbb{R}^d))$ was obtained in \cite[Theorem 1.1]{I} for every admissible pair $(q,r)$. A crucial observation in the proof of Theorem \ref{thm-9} is that the scheme $Z_{\tau}$ does not increase its $L^2 (\mathbb{R}^d)$ norm. On the other hand, the $H^1 (\mathbb{R}^d)$ norm of $Z_{\tau}$ is not guaranteed that it will not increase. This is a main difficulty when we obtain the desired global(-in-time) stability in Theorem \ref{thm-9'}.
\begin{thm}\label{thm-9}
Let $d \geq 1$ and $0<p<\frac{4}{d}$, and suppose that $\phi \in H^1 (\mathbb{R}^d)$.
Then, for any time $T>0$ and any admissible pair $(q,r)$, the approximation $Z_\tau$ satisfies
    $$
    \| Z_{\tau}(n \tau)\|_{\ell^q (n\tau \in [0,T]; W^{1,r} )}
    \leq  \exp\Big( C_{d,p}T  \max\big\{ \|\phi\|_{H^1}^{\frac{2p(p+2)}{4-(d-2)p}}, ~\|\phi\|_{L^2}^{\frac{4p}{4-dp}}\big\} \Big)
    $$
for any $\tau \in (0,1)$.
\end{thm}

\begin{thm}\label{thm-9'}
Let $1\leq d <6$ and $1\leq p < p_d$.
Suppose that $\phi \in H^1 (\mathbb{R}^d)$, and consider a time $T>0$ such that $\sup_{0 \leq t \leq T} \|u(t)\|_{H^1 (\mathbb{R}^d)} < \infty$.
Then, for any admissible pair $(q,r)$, there is a constant $C(d,p,T,\phi)>0$ such that the approximation $Z_\tau$ satisfies
    $$
    \| Z_{\tau}(n \tau)\|_{\ell^q (n\tau \in [0,T]; W^{1,r} )}
    \leq  C(d,p,T,\phi)
    $$
for any $\tau \in (0,1)$.
\end{thm}

Towards this global $H^1$ stability result, we first prove the corresponding local\,(-in-time) stability result (see Proposition \ref{thm-3}). Then for any $T>0$, we extend the local $H^1$ stability onto $[0,T]$ by an induction after dividing $[0,T]$ into small subintervals. This inductive step is provided separately for the cases $0 < p< \frac{4}{d}$ and $1 \leq p <p_d$. In the case that $0<p <\frac{4}{d}$, we obtain an inductive estimate for proving \mbox{Theorem \ref{thm-9}} by combining the local $H^1$ stability with the $\ell^{q} (n\tau \in [0,T]; L^{r} (\mathbb{R}^d))$ stability result on $Z_{\tau}$, obtained by Ignat \cite{I}. In the estimate, we will see that one may extend the local stability of $Z_{\tau}$ to the Sobolev space $\ell^{q} (n\tau \in I; W^{1,r} (\mathbb{R}^d))$ on the next interval $I$ once the value $|I|^{1-\frac{dp}{4}}\|Z_{\tau}(n\tau)\|_{\ell^{q} (n\tau \in I; L^{r})}$ is small enough.

This procedure breaks down in the mass super-critical case $\frac{4}{d}\leq p <p_d$, since we do not have \emph{a priori} bound on $\|Z_{\tau}(n\tau)\|_{\ell^{q} (n\tau \in (0,T); L^{r})}$ with some  $(q,r)$. Instead, we shall apply the local $H^1$ stability result of Proposition \ref{thm-3} recursively in a direct way. To make it possible, the major task is to control the growth of the $\|Z_{\tau}\|_{H^1 (\mathbb{R}^d)}$ norm when iterating the local $H^1$ stability result. For this aim, we turn to verify that the scheme $Z_{\tau}$ converges to the solution $u$ in $H^1 (\mathbb{R}^d)$ as $\tau>0$ goes to $0^{+}$ on an interval where the $H^1$ stability of $Z_{\tau}$ is known. It then enables us to utilize the fact that $\|u(t)\|_{H^1}$ is bounded on the interval $[0,T]$ for any fixed $T< T_{max}$. Consequently, we have a good control on $\|Z_{\tau}\|_{H^1 (\mathbb{R}^d)}$ when $\tau >0$ is small enough. By exploiting this idea, we will obtain the global $H^1$ stability by iterating the local $H^1$ stability for the case $\tau \in (0,T_*)$ with a suitable choice of $T_* = T_*(d,p,T,\phi)>0$.
\

%Consequently, for any given $\phi \in H^1 (\mathbb{R}^d)$, we will show that the local-in-time stability can be extended to the Sobolev space $\ell^{q} (n\tau \in (0,T); W^{1,r} (\mathbb{R}^d))$ for any $T<\infty$.

The remainder of this paper is organized as follows.
In Section \ref{sec-2}, we recall the Strichartz estimates and their discrete versions of these for the modified linear flow $S_{\tau}(t)$. In addition, we present some estimates for $N(\tau)$ and $\Pi_{\tau}$, and recall a detail regarding the well-posedness result for \eqref{eq-main}.
In Section \ref{sec-6}, we present the proof of Theorem \ref{thm-2} for the energy-subcritical case $0< p < p_d$.
In Section \ref{sec-4}, we prove the local $H^1$ stability result on the splitting scheme $Z_{\tau}$, and we prove Theorem \ref{thm-9} which is the mass-subcritical case $0< p < \frac{4}{d}$.
In Section \ref{sec-10'}-\ref{sec-10}, we prove Theorem \ref{thm-9'} which is the energy-subcritical case $1\leq p < p_d$.
% In Section \ref{num} we give a numerical result of the splitting scheme based on the finite element method.

\

\noindent \textbf{Notations}

\begin{itemize}
\item If a constant depends on some other values, we mark it like as $C_T$ (depending on time $T$) or $C(d,p,T,\phi)$ (depending on dimension $d$, nonlinear exponent $p$, time  $T$ and initial data $\phi$). We also use the notations $\alpha_{d,p}$, $\beta_{d,p}$, and $\gamma_{d,p}$ to denote positive constants determined by $d$ and $p$.

\item For $0  \leq a<b < \infty$, we often write $\|\cdot\|_{\ell^{q} (n\tau \in [a,b]; \mathcal{B})}$ as $\|\cdot\|_{\ell^q (a,b;\mathcal{B})}$ for $\mathcal{B} = W^{k,q} (\mathbb{R}^d)$ or $L^q (\mathbb{R}^d)$.

\item We often simply denote $W^{k,q} (\mathbb{R}^d)$ as $W^{k,q}$, and do similarly for $H^k (\mathbb{R}^d)$ and $L^{r} (\mathbb{R}^d)$.

\item The pair $(q_0, r_0)$ denotes the admissible pair $\left( \frac{4(p+2)}{dp}, p+2\right)$.

\item We write `local'   to mean `local-in-time' for the sake of simplicity.

\item The notation `stable' means `stable uniformly in $\tau \in (0,1)$'.

\item $\nabla f$ always means $\nabla_x f$ even if $f$ is a time-space function.

\item In Section \ref{sec-10'}-\ref{sec-10}, we precisely write $Z_{\tau}^{\phi}$ and $u^{\phi}$ to denote the flow $Z_{\tau}$ and the solution $u$ corresponding to the initial data $\phi$.
\end{itemize}

%%%%%%%%%%%%%%%%%%%%%%%%%%%%%%%%%%%%%%%%%%%%%%%%%%%%%%%%%%%%%%%%%%%%%%%%%%%%%%%%%%%%%%%%%%%%%

\section{Preliminary lemmas}\label{sec-2}

In this section, we will introduce some basic lemmas that will be used throughout the paper.
Firstly, we state the Strichartz estimates for the Schr\"odinger equation.

\begin{thm}\label{thm-2-1}
Let $(q,r)$ and $(\tilde{q}, \tilde{r})$ be any admissible pairs. Then, there exist $C_{d,q}, C_{d,q,\widetilde{q}}>0$ such that
    \begin{equation}\label{eq-st-1}
    \|S_{\tau} (\cdot) \phi \|_{L^q (\mathbb{R}, L^r (\mathbb{R}^d))}
    \leq C_{d,q} \|\phi\|_{L^2 (\mathbb{R}^d)},
    \end{equation}
    $$
    \left\|\int_{\mathbb{R}} S_{\tau} (-s) f (s) ds \right\|_{L^2 (\mathbb{R}^d)}
    \leq C_{d,q} \|f\|_{L^{\tilde{q}'} ( \mathbb{R}, L^{\tilde{r}'}(\mathbb{R}^d))},
    $$
and
    $$
    \left\| \int_{s< t} S_{\tau} (t-s) f (s) ds \right\|_{L^q (\mathbb{R}, L^{r}(\mathbb{R}^d))}
    \leq C_{d,q,\tilde{q}} \|f\|_{L^{\tilde{q}'}(\mathbb{R}, L^{\tilde{r}'}(\mathbb{R}^d))}
    $$
hold for all $\phi\in L^2 (\mathbb{R}^d)$ and $f\in L^{\tilde{q}'}(\mathbb{R}, L^{\tilde{r}'}(\mathbb{R}^d))$.
\end{thm}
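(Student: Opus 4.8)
The plan is to reduce all three inequalities to the classical Strichartz estimates for the free Schr\"odinger group $e^{it\Delta}$ on $\mathbb{R}^d$ (see, e.g., \cite{Ca}; the endpoint pair $(2,6)$, relevant only when $d=3$, is covered by the $TT^{*}$ argument of Keel and Tao). The reduction uses two structural facts about $S_\tau(t)=e^{it\Delta}\Pi_\tau$. First, $\Pi_\tau$ is a Fourier multiplication operator with real symbol $\chi(\tau^{1/2}\xi)$, so it is self-adjoint, commutes with $e^{it\Delta}$, and satisfies $\|\Pi_\tau\psi\|_{L^2}\le\|\chi\|_{L^\infty}\|\psi\|_{L^2}$ by Plancherel. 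Second, $\Pi_\tau$ is bounded on $L^r(\mathbb{R}^d)$ for every $r\in[1,\infty]$ with an operator norm independent of $\tau$. Granting these two facts, all three estimates follow from the classical ones applied to truncated data.

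For the homogeneous estimate, write $S_\tau(t)\phi=e^{it\Delta}(\Pi_\tau\phi)$, apply the classical homogeneous Strichartz estimate to $\Pi_\tau\phi\in L^2$, and bound $\|\Pi_\tau\phi\|_{L^2}\le\|\chi\|_{L^\infty}\|\phi\|_{L^2}$. For the dual estimate, pull the multiplier out of the integral, $\int_{\mathbb{R}}S_\tau(-s)f(s)\,ds=\Pi_\tau\int_{\mathbb{R}}e^{-is\Delta}f(s)\,ds$, use the $L^2$-boundedness of $\Pi_\tau$, and apply the classical dual estimate for the pair $(\tilde q,\tilde r)$. For the inhomogeneous (retarded) estimate, rewrite
\[
\int_{s<t}S_\tau(t-s)f(s)\,ds=\int_{s<t}e^{i(t-s)\Delta}\big(\Pi_\tau f\big)(s)\,ds ,
\]
apply the classical retarded Strichartz estimate for the admissible pairs $(q,r)$ and $(\tilde q,\tilde r)$ to the source $\Pi_\tau f$, and finish with the uniform bound $\|\Pi_\tau f\|_{L^{\tilde q'}(\mathbb{R};L^{\tilde r'})}\le C\|f\|_{L^{\tilde q'}(\mathbb{R};L^{\tilde r'})}$, valid since $\Pi_\tau$ acts only in the spatial variable.

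The theorem is thus a soft consequence of the classical estimates, and the only genuine input — hence the main (though mild) obstacle — is the uniform-in-$\tau$ $L^r$-boundedness of $\Pi_\tau$. I would prove it by writing $\Pi_\tau f=K_\tau*f$ with $K_\tau(x)=\tau^{-d/2}\eta(\tau^{-1/2}x)$, where $\eta$ is the inverse Fourier transform of $\chi$; a change of variables gives $\|K_\tau\|_{L^1(\mathbb{R}^d)}=\|\eta\|_{L^1(\mathbb{R}^d)}$ independently of $\tau$, so Young's convolution inequality yields $\|\Pi_\tau\|_{L^r\to L^r}\le\|\eta\|_{L^1}$ for every $r\in[1,\infty]$ and every $\tau>0$. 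This is precisely where the hypothesis $\chi\in C^{K}(\mathbb{R}^d)$ with $K=2d$ is used: since $\chi$ is compactly supported and $C^{2d}$, repeated integration by parts gives the decay $|\eta(x)|\le C_\chi(1+|x|)^{-2d}$, which is integrable on $\mathbb{R}^d$ because $2d>d$. (Alternatively, one could run the Keel--Tao argument directly for $S_\tau(t)S_\tau(s)^{*}=e^{i(t-s)\Delta}\Pi_\tau^{2}$, whose kernel still obeys an $O(|t-s|^{-d/2})$ dispersive bound in $L^\infty$ — again by $\eta\in L^1$ — but this is no shorter and hides the role of the cut-off.)
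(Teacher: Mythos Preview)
Your proposal is correct and follows essentially the same approach as the paper: the paper also reduces to the classical Strichartz estimates for $S(t)=e^{it\Delta}$ via $S_{\tau}(t)\phi = S(t)(\Pi_\tau\phi)$ and the uniform $L^r$-boundedness of $\Pi_\tau$ (the second inequality in \eqref{eq-2-20}). Your explicit derivation of the $L^r$-bound via $\Pi_\tau f = K_\tau * f$ and Young's inequality is in fact exactly how the paper establishes the related estimates \eqref{eq-2-21}--\eqref{eq-2-22} in Lemma~\ref{lem-2-6}, while for \eqref{eq-2-20} itself the paper simply cites multiplier theory; either route is fine.
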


Strichartz \cite{St} proved \eqref{eq-st-1} for $S(t)$ with $q=r$. The other two estimates follow by a duality argument and the Christ-Kiselev lemma \cite{CK}.
Later, it was extended by Keel and Tao \cite{KT} to all admissible pairs, including the endpoint case $(q,r)=(2,\frac{2d}{d-2})$. These estimates for $S(t)$ are extended easily to the frequency localized operator $S_{\tau}(t)$ using that $S_{\tau}(t)\phi = S(t) (\Pi_\tau \phi)$ and \eqref{eq-2-20}.
Next we recall the time discrete versions of the Strichartz estimates, obtained by Ignat \cite{I}.
\begin{thm}[{\cite[Theorem 2.1]{I}}]\label{thm-str}
Let $(q,r)$ and $(\tilde{q}, \tilde{r})$ be any admissible pairs. Then, there exist $C_{d,q}, C_{d,q,\widetilde{q}}>0$ such that
    \begin{equation}\label{eq-st-4}
    \|S_{\tau}(\cdot) \phi \|_{\ell^q (\tau \mathbb{Z}; L^r (\mathbb{R}^d))}
    \leq C_{d,q}  \|\phi \|_{L^2 (\mathbb{R}^d)},
    \end{equation}
    $$
    \left\| \tau\sum_{n \in \mathbb{Z}} S_{\tau}(-n \tau) f (n \tau) \right\|_{L^2 (\mathbb{R}^d)}
    \leq C_{d,q} \|f\|_{\ell^{\tilde{q}'}(\tau \mathbb{Z}; L^{\tilde{r}'} (\mathbb{R}^d))} ,
    $$
and
    \begin{equation}\label{eq-st-6}
    \left\| \tau \sum_{k=-\infty}^{n-1}S_{\tau}((n-k) \tau) f (k\tau)\right\|_{\ell^q (\tau \mathbb{Z}; L^r (\mathbb{R}^d))}
    \leq C_{d,q,\tilde{q}} \|f\|_{\ell^{\tilde{q}'}(\tau \mathbb{Z}; L^{\tilde{r}'} (\mathbb{R}^d))}
    \end{equation}
hold for all $\phi\in L^2 (\mathbb{R}^d)$ and $f\in \ell^{\tilde{q}'}(\tau \mathbb{Z}; L^{\tilde{r}'}(\mathbb{R}^d))$.
\end{thm}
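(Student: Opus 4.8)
These estimates are those of \cite[Theorem~2.1]{I}; the plan is to run the classical $TT^{*}$ proof of the Strichartz inequalities, the one new ingredient being that the frequency cut-off $\Pi_\tau$ supplies the dispersion that a discrete sum lacks at the single instant $t=0$. The first step is to record a uniform-in-$\tau$ discrete kernel estimate for $S_\tau$. Since $\Pi_\tau$ is convolution with $\tau^{-d/2}\check\chi(\tau^{-1/2}\,\cdot\,)$, whose $L^1$ norm equals $\|\check\chi\|_{L^1}$ independently of $\tau$, the operator $S_\tau(t)=\Pi_\tau S(t)=S(t)\Pi_\tau$ still satisfies $\|S_\tau(t)g\|_{L^2}\le\|g\|_{L^2}$ and $\|S_\tau(t)g\|_{L^\infty}\le C|t|^{-d/2}\|g\|_{L^1}$, hence by interpolation $\|S_\tau(t)g\|_{L^r}\le C|t|^{-2/q}\|g\|_{L^{r'}}$ for every admissible $(q,r)$. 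Independently, because $S_\tau(t)g$ has Fourier support in $|\xi|\le 2\tau^{-1/2}$, Bernstein's inequality gives $\|S_\tau(t)g\|_{L^r}\le C\tau^{-2/q}\|g\|_{L^{r'}}$ uniformly in $t$ (using $\tfrac d2(\tfrac1{r'}-\tfrac1r)=\tfrac2q$). Taking the better of the two bounds,
$$\|S_\tau(m\tau)g\|_{L^r}\le C\,\tau^{-2/q}\,\langle m\rangle^{-2/q}\,\|g\|_{L^{r'}},\qquad m\in\mathbb Z,\quad\langle m\rangle:=\max(|m|,1).$$
It is the Bernstein bound that makes the structure of the scheme relevant: without $\Pi_\tau$ the diagonal $S_\tau(0)=I$ is not bounded from $L^{r'}$ to $L^{r}$ and the discrete estimate collapses.

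Next I would introduce $T\colon L^2\to\ell^q(\tau\mathbb Z;L^r)$, $(T\phi)(n\tau)=S_\tau(n\tau)\phi$, whose adjoint is $T^{*}f=\tau\sum_n S_\tau(-n\tau)f(n\tau)$, so that $TT^{*}f(n\tau)=\tau\sum_k S_\tau((n-k)\tau)f(k\tau)$. The homogeneous bound \eqref{eq-st-4} is equivalent to boundedness of $TT^{*}$ from $\ell^{q'}(\tau\mathbb Z;L^{r'})$ to $\ell^{q}(\tau\mathbb Z;L^{r})$; inserting the kernel estimate above and applying the discrete Hardy--Littlewood--Sobolev (weak Young) inequality in $n$ — legitimate because $\langle\cdot\rangle^{-2/q}\in\ell^{q/2,\infty}$ and $\tfrac1q+1=\tfrac1{q'}+\tfrac2q$ — one finds that all powers of $\tau$ cancel, so the constant is uniform in $\tau\in(0,1)$ (indeed the estimate is scale invariant). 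The middle, $L^2$-valued, inequality is then just $\|T^{*}f\|_{L^2}\le\|T\|\,\|f\|_{\ell^{\tilde q'}L^{\tilde r'}}$, dual to \eqref{eq-st-4}; and writing $\tau\sum_k S_\tau((n-k)\tau)f(k\tau)=S_\tau(n\tau)\,T^{*}f$ and using \eqref{eq-st-4} together with the middle inequality gives the \emph{full}-sum inhomogeneous bound for arbitrary admissible pairs $(q,r)$, $(\tilde q,\tilde r)$. The retarded sum \eqref{eq-st-6} then follows from the Christ--Kiselev lemma \cite{CK} (in its discrete form) whenever $\tfrac1q+\tfrac1{\tilde q}<1$.

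The only genuine obstacle is the endpoint $q=\tilde q=2$, which for $d\le 3$ concerns merely the pair $(2,6)$ in dimension $3$: there $\langle\cdot\rangle^{-1}\notin\ell^1$, the weak Young inequality fails at the corner, and Christ--Kiselev no longer applies. I would handle this exactly as in Keel--Tao \cite{KT}: decompose $TT^{*}=\sum_{j\ge0}T_j$ according to $|n-k|\sim 2^{j}$, estimate each $T_j$ from the $L^2\to L^2$ conservation bound and the $L^1\to L^\infty$ dispersive bound with their correct dependence on $j$, then real-interpolate and sum in $j$; a causal variant of the same dyadic decomposition yields the retarded endpoint estimate directly. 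Transcribing this argument to the discrete-time setting is precisely the content of \cite[Theorem~2.1]{I}, which I would either reproduce or simply invoke.
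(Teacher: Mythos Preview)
Your sketch is correct and is essentially the standard proof; the paper itself does not prove this theorem but simply cites it from Ignat \cite[Theorem~2.1]{I}, so there is nothing to compare against beyond the original reference. Your identification of the key point---that the frequency localisation $\Pi_\tau$ furnishes the Bernstein bound $\|S_\tau(0)g\|_{L^r}\le C\tau^{-2/q}\|g\|_{L^{r'}}$ needed to handle the diagonal term $m=0$, after which the usual $TT^{*}$/Hardy--Littlewood--Sobolev and Christ--Kiselev machinery goes through with all $\tau$-powers cancelling---is exactly right and matches the argument in \cite{I}.
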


Since the operators $S_{\tau}$ and $\nabla$ are commutative, the above result immediately implies the following corollary.

\begin{cor}\label{thm-str2}
Let $(q,r)$ and $(\tilde{q}, \tilde{r})$ be any admissible pairs. Then, there exist $C_{d,q}, C_{d,q,\widetilde{q}}>0$ such that
    \begin{equation}\label{eq-st-7}
    \|S_{\tau}(\cdot) \phi \|_{\ell^q (\tau \mathbb{Z}; W^{1,r} (\mathbb{R}^d))}
    \leq C_{d,q}  \|\phi \|_{H^1 (\mathbb{R}^d)},
    \end{equation}
and
    \begin{equation}\label{eq-st-8}
    \left\| \tau \sum_{k=-\infty}^{n-1}S_{\tau}((n-k) \tau) f (k\tau)\right\|_{\ell^q (\tau \mathbb{Z}; W^{1,r} (\mathbb{R}^d))}
    \leq C_{d,q,\tilde{q}} \|f\|_{\ell^{\tilde{q}'}(\tau \mathbb{Z}; W^{1,\tilde{r}'} (\mathbb{R}^d))}
    \end{equation}
hold for all $\phi\in H^1 (\mathbb{R}^d)$ and $f\in \ell^{\tilde{q}'}(\tau \mathbb{Z}; W^{1,\tilde{r}'}(\mathbb{R}^d))$.
\end{cor}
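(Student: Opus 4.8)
The plan is to reduce both inequalities of Corollary~\ref{thm-str2} directly to Theorem~\ref{thm-str}, exploiting the fact that every operator appearing in \eqref{eq-st-7} and \eqref{eq-st-8} is a Fourier multiplier in the spatial variable and therefore commutes with each partial derivative $\partial_j$, $j=1,\dots,d$. First I would recall the elementary norm equivalence
$$
\|g\|_{W^{1,r}(\mathbb{R}^d)} \simeq \|g\|_{L^r(\mathbb{R}^d)} + \sum_{j=1}^d \|\partial_j g\|_{L^r(\mathbb{R}^d)},
$$
with implied constants depending only on $d$; this reduces each $W^{1,r}$ estimate to the corresponding $L^r$ estimate applied to the function and to each of its $d$ first derivatives, at the cost of a harmless dimensional constant.

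For \eqref{eq-st-7}, I would write $S_\tau(t) = e^{it\Delta}\Pi_\tau$, note that both $e^{it\Delta}$ and $\Pi_\tau$ (defined through \eqref{eq-1-10}) are Fourier multipliers, and conclude $\partial_j\big(S_\tau(t)\phi\big) = S_\tau(t)\big(\partial_j\phi\big)$ for $\phi \in H^1(\mathbb{R}^d)$. Applying the homogeneous time-discrete Strichartz estimate \eqref{eq-st-4} once to $\phi$ and once to each $\partial_j\phi$, and summing over $j$, gives
$$
\|S_\tau(\cdot)\phi\|_{\ell^q(\tau\mathbb{Z};W^{1,r})} \leq C_{d,q}\Big(\|\phi\|_{L^2} + \sum_{j=1}^d\|\partial_j\phi\|_{L^2}\Big) \leq C_{d,q}\|\phi\|_{H^1},
$$
which is \eqref{eq-st-7}. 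For \eqref{eq-st-8} the same idea applies to the discrete Duhamel operator $T_\tau f(n\tau) := \tau\sum_{k=-\infty}^{n-1} S_\tau((n-k)\tau) f(k\tau)$: since $S_\tau((n-k)\tau)$ commutes with $\partial_j$ for every pair $(n,k)$, and $\partial_j$ acts only in $x$ while the summation is over the time index, one has $\partial_j\big(T_\tau f\big) = T_\tau\big(\partial_j f\big)$ whenever $f \in \ell^{\tilde q'}(\tau\mathbb{Z}; W^{1,\tilde r'})$. Invoking the inhomogeneous estimate \eqref{eq-st-6} for $f$ and for each $\partial_j f$ and summing then yields \eqref{eq-st-8}.

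The only point requiring (minimal) care is the justification of the commutation $\partial_j S_\tau = S_\tau\partial_j$ and $\partial_j T_\tau = T_\tau \partial_j$ at the level of the function spaces involved, i.e. that differentiation passes through the Fourier multiplier and through the discrete sum; this is routine since $\Pi_\tau$ already truncates to a band-limited function and the Strichartz bounds guarantee that the relevant sums converge in $L^r$. In short, I do not expect a genuine obstacle here: the corollary follows by applying Theorem~\ref{thm-str} coordinate-wise after commuting $\nabla$ past $S_\tau$, exactly as the phrase preceding the statement indicates.
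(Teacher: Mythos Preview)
Your proposal is correct and matches the paper's approach exactly: the paper itself gives no proof beyond the one-line remark that $S_\tau$ and $\nabla$ commute, so the corollary follows immediately from Theorem~\ref{thm-str}. Your write-up simply fills in the routine details of that commutation argument.
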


By combining the Strichartz estimates in Theorems \ref{thm-2-1} and \ref{thm-str} with the Christ-Kiselev lemma \cite{CK}, the following result was derived.

\begin{cor}[{\cite[Lemma 4.5]{I}}]
For any admissible pairs $(q,r)$ and $(\tilde{q}, \tilde{r})$ with $(q,\tilde{q})\neq(2,2)$, we have
    \begin{equation}\label{eq-2-6}
    \left\| \int_{s < n \tau} S_{\tau}(n \tau -s) f(s) ds \right\|_{\ell^q (\tau \mathbb{Z}; L^r (\mathbb{R}^d))}
    \leq C_{d,q,\tilde{q}} \left\| f\right\|_{L^{\tilde{q}'} (\mathbb{R}; L^{\tilde{r}'} (\mathbb{R}^d))}.
    \end{equation}
\end{cor}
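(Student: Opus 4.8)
The plan is to reduce the estimate to three familiar ingredients — the semigroup structure of $S_\tau$, the (already available) homogeneous Strichartz estimates, one continuous and one discrete, and the Christ--Kiselev lemma to reinstate the causal truncation $s<n\tau$ — exactly along the lines of \cite[Lemma~4.5]{I}.

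First I would drop the truncation and bound the full operator
$$
\widetilde{T}f(n\tau) := \int_{\mathbb{R}} S_\tau(n\tau-s)\,f(s)\,ds .
$$
Writing $S_\tau(t)=S(t)\Pi_\tau$ and using that $\Pi_\tau$ is a Fourier multiplier commuting with the free group, one has $S_\tau(n\tau-s)=S(n\tau)\Pi_\tau S(-s)=S_\tau(n\tau)\,S(-s)$, so that
$$
\widetilde{T}f(n\tau)=S_\tau(n\tau)\,g,\qquad g:=\int_{\mathbb{R}}S(-s)f(s)\,ds .
$$
The dual of the classical continuous homogeneous Strichartz estimate (the unlocalized counterpart of the middle inequality in Theorem~\ref{thm-2-1}) gives $\|g\|_{L^2(\mathbb{R}^d)}\le C_{d,\tilde q}\,\|f\|_{L^{\tilde q'}(\mathbb{R};L^{\tilde r'})}$, and the discrete homogeneous estimate \eqref{eq-st-4} of Theorem~\ref{thm-str} gives $\|S_\tau(\cdot)g\|_{\ell^q(\tau\mathbb{Z};L^r)}\le C_{d,q}\,\|g\|_{L^2}$. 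Composing the two bounds yields
$$
\big\|\widetilde{T}f\big\|_{\ell^q(\tau\mathbb{Z};L^r)}\le C_{d,q,\tilde q}\,\|f\|_{L^{\tilde q'}(\mathbb{R};L^{\tilde r'})}.
$$

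It remains to pass from $\widetilde T$ to the retarded operator $Tf(n\tau)=\int_{s<n\tau}S_\tau(n\tau-s)f(s)\,ds$, which is the role of the Christ--Kiselev lemma \cite{CK}. The point requiring the most care — and the only genuinely nontrivial step — is that here the two ``time'' variables live on different measure spaces: the output index $n\tau$ runs over $(\tau\mathbb{Z},\tau\cdot\mathrm{counting})$ while the input variable $s$ runs over $(\mathbb{R},ds)$. One should therefore invoke the measure-theoretic form of the lemma: both spaces are $\sigma$-finite, the truncation set $\{(n,s):s<n\tau\}$ is measurable for the ambient order on $\mathbb{R}$, and the standard dyadic-in-time proof applies verbatim. (Equivalently, one may realize $\ell^q(\tau\mathbb{Z};L^r)$ isometrically inside $L^q(\mathbb{R};L^r)$ by step functions and note that on the cell $t\in[n\tau,(n+1)\tau)$ the truncation $\{s<n\tau\}$ is dominated by the genuine causal one $\{s<t\}$, so nothing is lost.) The lemma requires the input exponent to be \emph{strictly} below the output exponent, i.e.\ $\tilde q'<q$; for admissible pairs one always has $\tilde q'\le 2\le q$, and equality on both ends forces $\tilde q=q=2$, so the hypothesis $(q,\tilde q)\neq(2,2)$ is precisely what guarantees $\tilde q'<q$. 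This yields \eqref{eq-2-6}. All remaining points — the semigroup identity, the commutation of $\Pi_\tau$ with $S(\cdot)$, and the exponent bookkeeping — are routine.
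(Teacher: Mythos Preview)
Your argument is correct and follows exactly the route the paper indicates: factor the untruncated operator via $S_\tau(n\tau-s)=S_\tau(n\tau)S(-s)$, apply the dual continuous Strichartz estimate followed by the discrete homogeneous estimate \eqref{eq-st-4}, and then restore the causal truncation by Christ--Kiselev, using that $(q,\tilde q)\neq(2,2)$ ensures $\tilde q'<q$. Your remarks on the mixed discrete/continuous setting for Christ--Kiselev are a useful elaboration of what the paper (and \cite{I}) leave implicit.
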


In the remaining part of this section, we prove some basic estimates that are used frequently in this paper.
\begin{lem}\label{lem-2-1}
For any $p \in (0, \infty)$, there exists a positive constant $c_p>0$ such that
    \begin{equation}\label{eq-2-7}
    \left| \frac{N(\tau) - I}{\tau} v - \frac{N(\tau) - I}{\tau}w \right|
    \leq c_p|v-w|(|v|^p + |w|^p)
    \end{equation}
and
    \begin{equation}\label{eq-2-10}
    \left| \frac{N(\tau)-I}{\tau} v\right|
    = \left| \frac{\exp(i \tau \lambda|v|^p) -1}{\tau} v\right| \leq |v|^{p+1}
    \end{equation}
hold for all $v, w \in \mathbb{C}$.
Furthermore, for weakly differentiable $f : \mathbb{R}^d \rightarrow \mathbb{C}$, we have a pointwise estimate
    \begin{equation}\label{eq-2-13}
    \left| \nabla \left( \frac{N(\tau) - I}{\tau} f \right) \right| \leq (p+1) |f|^p |\nabla f|.
    \end{equation}
\end{lem}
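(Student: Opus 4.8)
The plan is to reduce everything to elementary estimates on the scalar function $g_\tau(z) = \frac{\exp(i\tau\lambda|z|^p)-1}{\tau}\,z$ for $z \in \mathbb{C}$, and then differentiate under composition for the gradient bound \eqref{eq-2-13}. First I would establish \eqref{eq-2-10}: since $|\exp(i\tau\lambda|v|^p)-1| \le \tau|\lambda||v|^p = \tau|v|^p$ (using $|e^{i\theta}-1|\le|\theta|$ and $\lambda\in\{-1,1\}$), multiplying by $|v|/\tau$ gives $|g_\tau(v)| = \left|\frac{\exp(i\tau\lambda|v|^p)-1}{\tau}v\right| \le |v|^{p+1}$ immediately.

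For \eqref{eq-2-7}, the natural approach is to view $g_\tau$ as a map $\mathbb{C}\to\mathbb{C}$ (identified with $\mathbb{R}^2\to\mathbb{R}^2$) and bound $|g_\tau(v)-g_\tau(w)|$ by the mean value inequality, i.e.\ by $\sup$ of the operator norm of the (real) Jacobian $Dg_\tau$ along the segment from $w$ to $v$, times $|v-w|$. So the key computation is a pointwise bound $\|Dg_\tau(z)\| \le c_p |z|^p$ uniformly in $\tau\in(0,1)$ (indeed for all $\tau>0$). Writing $g_\tau(z) = h_\tau(|z|^p)\,z$ with $h_\tau(s) = \frac{e^{i\tau\lambda s}-1}{\tau}$, one differentiates: the derivative has a term $h_\tau(|z|^p)$ acting as multiplication (bounded by $|z|^p$ as above) and a term involving $h_\tau'(|z|^p) = i\lambda e^{i\tau\lambda|z|^p}$, which is bounded by $1$, multiplied by the derivative of $|z|^p$, which is $O(|z|^{p-1})$, and then by the extra factor $z$ from the product rule, giving another $O(|z|^p)$. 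Summing, $\|Dg_\tau(z)\| \le c_p|z|^p$. Then for any segment $z(t) = w + t(v-w)$, $t\in[0,1]$, we get $|g_\tau(v)-g_\tau(w)| \le c_p|v-w|\sup_{t}|z(t)|^p \le c_p|v-w|(|v|^p+|w|^p)$ since $|z(t)| \le \max(|v|,|w|) \le (|v|^p+|w|^p)^{1/p}$ up to a constant (or simply $|z(t)|^p \le 2^p(|v|^p+|w|^p)$); absorbing constants into $c_p$ finishes \eqref{eq-2-7}.

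For \eqref{eq-2-13}, apply the chain rule to $\frac{N(\tau)-I}{\tau}f = g_\tau\circ f$: $\nabla\left(g_\tau(f)\right) = Dg_\tau(f)\,\nabla f$ pointwise (a.e., by the weak differentiability of $f$ and the Lipschitz regularity of $g_\tau$ on bounded sets — standard for Sobolev chain rules with Lipschitz outer functions), so $\left|\nabla\left(\frac{N(\tau)-I}{\tau}f\right)\right| \le \|Dg_\tau(f)\|\,|\nabla f|$. It remains to sharpen the Jacobian bound to the claimed constant: a direct expansion of $\nabla(e^{i\tau\lambda|f|^p}f)/\tau$ gives one term $\frac{e^{i\tau\lambda|f|^p}-1}{\tau}\nabla f$, bounded by $|f|^p|\nabla f|$, and one term $e^{i\tau\lambda|f|^p}(i\lambda)\,p|f|^{p-1}\nabla|f|\cdot f \cdot \frac{1}{\tau}\cdot\tau$ — wait, more carefully: $\nabla(e^{i\tau\lambda|f|^p}) = i\tau\lambda\, e^{i\tau\lambda|f|^p}\nabla(|f|^p)$, so dividing by $\tau$ and multiplying by $f$ yields $i\lambda e^{i\tau\lambda|f|^p}\nabla(|f|^p)\,f$, and since $|\nabla(|f|^p)|\le p|f|^{p-1}|\nabla f|$ and $|e^{i\tau\lambda|f|^p}|=|i\lambda|=1$, this term is bounded by $p|f|^p|\nabla f|$. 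Adding the two contributions gives $\le (p+1)|f|^p|\nabla f|$, exactly \eqref{eq-2-13}.

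The main obstacle is the pointwise Jacobian estimate near $z=0$ when $p<1$: there $|z|^{p-1}$ blows up, but it is always paired with $|z|$ (from the product rule factor) or with $|\nabla f|$ times a genuine factor of $|f|$, so the product is $O(|z|^p)$ and harmless; one just has to be careful that $|z|^p$ is $C^1$ away from the origin and that the singular set $\{f=0\}$ has the derivative expression vanishing there in the sense needed for the Sobolev chain rule (which holds since $g_\tau(z) = O(|z|^{p+1})$ is genuinely differentiable at $0$ with zero derivative). Apart from this routine care at the origin, the lemma is a direct computation.
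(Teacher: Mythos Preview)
Your proof is correct and follows essentially the same approach as the paper: the paper dispatches \eqref{eq-2-7} and \eqref{eq-2-10} by a direct calculation with the mean value theorem (citing \cite{I}, Lemma 4.2), and for \eqref{eq-2-13} it carries out exactly the product-rule expansion you describe, bounding the term $\frac{e^{i\tau\lambda|f|^p}-1}{\tau}\nabla f$ via \eqref{eq-2-10} and the term $e^{i\tau\lambda|f|^p}\cdot i\lambda\,\nabla(|f|^p)\,f$ by $p|f|^p|\nabla f|$ to get the constant $p+1$. Your additional care about the Jacobian at $z=0$ for $p<1$ and the Sobolev chain rule is welcome detail that the paper omits, but the underlying argument is the same.
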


\begin{proof}
The estimates \eqref{eq-2-7} and \eqref{eq-2-10} follow from a direct calculation using the mean value theorem (see also Lemma 4.2 in \cite{I}).
For the last estimate, we notice that
    $$
    \begin{aligned}
    \left| \nabla \left( \frac{N(\tau) - I}{\tau} f \right) \right|
    &= \left| \nabla \left( \frac{\exp(i\tau \lambda |f|^p) - 1}{\tau} f \right) \right| \\
    &\leq \left|\left( \frac{\exp(i\tau \lambda |f|^p) - 1}{\tau} \right)\nabla f \right|
    + \Big|\exp (i\tau  \lambda|f|^p)~ p|f|^p \nabla f \Big| \\
    &\leq (p+1)|f|^{p} |\nabla f|,
    \end{aligned}
    $$
where \eqref{eq-2-10} is used for the last inequality. The lemma is proved.
\end{proof}

\begin{lem}\label{lem-2-6}
For any $1\leq q \leq r <\infty$ and $\phi:\mathbb{R}^d\rightarrow\mathbb{C}$, we have
    \begin{equation}\label{eq-2-20}
    \big\| \Pi_{\tau}\phi - \phi \big\|_{L^{r} (\mathbb{R}^d)}
    \leq C\tau^{1/2} \big\| (-\Delta)^{1/2} \phi \big\|_{L^{r} (\mathbb{R}^d)} ,
    \end{equation}
    \begin{equation}\label{eq-2-20'}
    \| \Pi_{\tau} \phi\|_{L^r (\mathbb{R}^d)}
    \leq C \|\phi\|_{L^r (\mathbb{R}^d)} ,
    \end{equation}
    \begin{equation}\label{eq-2-21}
    \big\| \nabla (\Pi_{\tau} \phi) \big\|_{L^r (\mathbb{R}^d)}
    \leq C \tau^{-\frac{1}{2}} \| \phi \|_{L^r (\mathbb{R}^d)},
    \end{equation}
and
    \begin{equation}\label{eq-2-22}
    \|\Pi_{\tau} \phi \|_{L^{r}(\mathbb{R}^d)}
    \leq C \tau^{\frac{d}{2} \left( \frac{1}{r}-\frac{1}{q}\right)} \| \phi\|_{L^{q}(\mathbb{R}^d)}.
    \end{equation}
\end{lem}

\begin{proof}
The estimate \eqref{eq-2-20} and \eqref{eq-2-20'} follow from the basic multiplier theory
(see, e.g., Theorem 4.4 in \cite{I} and Theorem 5.2.2 in \cite{G}).
In order to show that \eqref{eq-2-21} and \eqref{eq-2-22},
we notice from the definition \eqref{eq-1-10} that
    \begin{equation}\label{eq-2-23}
    \Pi_{\tau} \phi(x) = (K_{\tau} * \phi ) (x),
    \end{equation}
where $K_{\tau} (x) = \tau^{-\frac{d}{2}} \widehat{\chi} (\tau^{-1/2} x)$.
Since $\chi \in C^N (B^d(0,2))$ with $N=2d$, its Fourier transform $\widehat{\chi}$ admits the decay property $|\widehat{\chi}(\xi)| \leq C_{d,N} (1+|\xi|)^{-N}$. In addition, by the equality $\partial_{\xi_i} \widehat{\chi}(\xi) = \widehat{x_i \chi} (\xi)$ we have $|\partial_{\xi_i} \hat{\chi}(\xi)| \leq C_{d,N} (1+|\xi|)^{-N}$, and
    \begin{equation}\label{eq-2-24}
    \partial_{x_i} \big( \Pi_{\tau} \phi \big) (x)
    = \tau^{-\frac{d+1}{2}} \big( \partial_{x_i} \widehat{\chi} \big)(\tau^{-1/2} \cdot)* \phi (x).
    \end{equation}
By applying Young's inequality, we obtain that
    $$
    \begin{aligned}
    \big\| \nabla \big( \Pi_{\tau} \phi \big) \big\|_{L^r (\mathbb{R}^d)}
    &\leq C \tau^{-\frac{d+1}{2}} \big\| \nabla \widehat{\chi} (\tau^{-1/2} \cdot) \big\|_{L^1 (\mathbb{R}^d)} \| \phi\|_{L^r (\mathbb{R}^d)} \\
    &\leq C \tau^{-\frac{1}{2}} \| \phi\|_{L^r (\mathbb{R}^d)},
    \end{aligned}
    $$
which gives estimate \eqref{eq-2-21}.
By applying Young's inequality to \eqref{eq-2-23} with $q<r$,
	\begin{equation*}
	\bigl\|\Pi_{\tau} \phi\bigr\|_{L^{r} (\mathbb{R}^d)} \leq C \big\|K_{\tau}\big\|_{L^{\alpha}(\mathbb{R}^d)} \|\phi\|_{L^{q}(\mathbb{R}^d)},
	\end{equation*}
where $1/r + 1 = 1/\alpha + 1/q$. This verifies the estimate \eqref{eq-2-22}. The proof is finished.
\end{proof}

\begin{lem}\label{lem-5-2}
For any admissible pairs $(q,r)$ and $(\widetilde{q},\widetilde{r})$, there is a constant $C_{d,q,\widetilde{q}}>0$ such that
    \begin{equation}\label{eq-5-5}
    \begin{split}
    &\left\| \int_{s< n \tau} S_{\tau}(n \tau- s) f(s) ds
        - \tau \sum_{k=-\infty}^{n-1} S_{\tau}(n \tau - k \tau) f(k \tau) \right\|_{\ell^q (\tau \mathbb{Z}, L^r (\mathbb{R}^d))} \\
    &\qquad\leq C_{d,q,\widetilde{q}}~ \tau^{1/2} \|f\|_{L^{\tilde{q}'}(\mathbb{R}, W^{1,\tilde{r}'}(\mathbb{R}^d))}
        + C_{d,q,\widetilde{q}}~ \tau \| \partial_t f \|_{L^{\tilde{q}'}(\mathbb{R}, L^{\tilde{r}'}(\mathbb{R}^d))}
    \end{split}
    \end{equation}
hold for all test function $f \in \mathcal{S}(\mathbb{R}^{d+1})$.
\end{lem}

\begin{proof}
First, we recall the following estimate from Lemma 4.3 and Lemma 4.6 in \cite{I}:
    \begin{equation}\label{eq-5-1}
    \begin{split}
    &\left\| \int_{s< n \tau} S_{\tau}(n \tau- s) f(s) ds
        - \tau \sum_{k=-\infty}^{n-1} S_{\tau}(n \tau - k \tau) f(k \tau) \right\|_{\ell^q (\tau \mathbb{Z}, L^r (\mathbb{R}^d))} \\
    &\qquad\leq C_{d,q,\widetilde{q}}~ \tau \| \nabla^2 f \|_{L^{\tilde{q}'}(\mathbb{R}, L^{\tilde{r}'}(\mathbb{R}^d))}
        + C_{d,q,\widetilde{q}}~ \tau \| \partial_t f \|_{L^{\tilde{q}'}(\mathbb{R}, L^{\tilde{r}'}(\mathbb{R}^d))},
    \end{split}
    \end{equation}
where $(q,r)$ and $(\widetilde{q},\widetilde{r})$ are any admissible pairs.
We notice that $S_{\tau} (t) f(x) = S_{\tau}(t) \Pi_{\tau/4} f(x)$, by definition of $S_{\tau}$. Using this and \eqref{eq-5-1} with Lemma \ref{lem-2-6}, we obtain that
    $$
    \begin{aligned}
    &\left\| \int_{s< n \tau} S_{\tau}(n \tau- s) f(s) ds
        - \tau \sum_{k=-\infty}^{n-1} S_{\tau}(n \tau - k \tau) f(k \tau) \right\|_{\ell^q (\tau \mathbb{Z}, L^r (\mathbb{R}^d))} \\
    &\qquad\leq C_{d,q,\widetilde{q}}~ \tau \left\| \nabla^2 \big(\Pi_{\tau/4} f \big)\right\|_{L^{\tilde{q}'}(\mathbb{R}, L^{\tilde{r}'}(\mathbb{R}^d))}
        + C_{d,q,\widetilde{q}}~ \tau \left\| \partial_t \big(\Pi_{\tau/4} f \big) \right\|_{L^{\tilde{q}'}(\mathbb{R}, L^{\tilde{r}'}(\mathbb{R}^d))} \\
    &\qquad\leq C_{d,q,\widetilde{q}}~ \tau^{1/2} \left\|  \nabla f \right\|_{L^{\tilde{q}'}(\mathbb{R}, L^{\tilde{r}'}(\mathbb{R}^d))}
        + C_{d,q,\widetilde{q}}~ \tau \left\| \partial_t f \right\|_{L^{\tilde{q}'}(\mathbb{R}, L^{\tilde{r}'}(\mathbb{R}^d))}.
    \end{aligned}
    $$
This proves the estimate \eqref{eq-5-5}.
\end{proof}

\begin{lem}\label{LemSov_2p+1}
Under the assumption \eqref{def_C_1}, we have
    \begin{equation}\label{Sov_2p+1}
    \big\| |\Pi_{\tau} u|^{2p+1} \big\|_{L^{q_0'} (0,T; L^{r_0'})}
    + \Big\| |\Pi_{\tau} u|^p \Pi_{\tau} (|u|^p u) \Big\|_{L^{q_0'} (0,T; L^{r_0'})}
    \leq C_{d,p}~ \tau^{-1/2} \big( 1 + T^{\frac{1}{q_0'}} \big) M_1^{2p+1} .
    \end{equation}
\end{lem}

\begin{proof}
The proof is a simple combination of H\"older's inequality and Sobolev's embedding.
We consider the first term of left hand side in \eqref{Sov_2p+1}.
By Lemma \ref{lem-2-6}, we have
    \begin{equation}\label{eq-(2p+1)}
    \big\| |\Pi_{\tau} u|^{2p+1} \big\|_{L^{q_0'} (0, T; L^{r_0'})} \\
    \leq C_{d,p} \tau^{-\frac{d}{2} (\frac{2p+1}{r_1}-\frac{1}{r_0'})} \| u \|_{L^{(2p+1)q_0'} (0,T; L^{r_1})}^{2p+1}
    \end{equation}
for all $r_1 \leq (2p+1)r_0'$.
We choose the value of $r_1 >0$ separately for the cases that $\frac{3}{d} \leq p < p_d$ and $0< p < \frac{3}{d}$ such as
    $$
    \frac{1}{r_1} =
    \biggl\{\begin{array}{ll} \frac{1}{2p+1} \big( \frac{p+1}{p+2} +\frac{1}{d} \big) &\quad \textrm{if}\quad \frac{3}{d} \leq p < p_d , \\
    \frac{1}{(2p+1)r_0'} &\quad \textrm{if}\quad 0< p < \frac{3}{d}.
    \end{array}\biggr.
    $$
First, we consider the case that $\frac{3}{d} \leq p < p_d$. In this case, we take $\frac{1}{r_1} =\frac{1}{2p+1} \big( \frac{p+1}{p+2} +\frac{1}{d} \big) <1$ and choose $q_2$ and $r_2$ such that
    $$
    \frac{1}{q_2} := \frac{1}{(2p+1)q_0'} = \frac{4(p+2)-dp}{4(p+2)(2p+1)}
    \quad\mbox{and}\quad
    \frac{1}{r_2} := \frac{1}{2} - \frac{2}{dq_2} = \frac{1}{2} - \frac{4(p+2)-dp}{2d(p+2)(2p+1)}
    $$
so that $(q_2, r_2)$ is an admissible pair whenever $\frac{3}{d}\leq p < p_d$.
Then, by \eqref{eq-(2p+1)} and the Sobolev embedding $W^{s,r_2}\subseteq L^{r_1}$ (with $\frac{1}{r_1}+\frac{s}{d}=\frac{1}{r_2}$), we obtain that
    $$
    \begin{aligned}
    \big\| |\Pi_{\tau} u|^{2p+1} \big\|_{L^{q_0'} (0,T; L^{r_0'})}
    &\leq C_{d,p}\tau^{-\frac{1}{2}} \| u \|_{L^{q_2} (0,T; L^{r_1})}^{2p+1} \\
    &\leq C_{d,p} \tau^{-\frac{1}{2}} \| u \|_{L^{q_2} (0,T; W^{s,r_2})}^{2p+1} ,
    \end{aligned}
    $$
where
    $$
    s = \frac{d}{2} - \frac{d+6}{2(2p+1)} .
    $$
One may check that $s \in [0,1]$ whenever $\frac{3}{d} \leq p \leq p_d$. Therefore,
we have that
    $$
    \begin{aligned}
    \big\| |\Pi_{\tau} u|^{2p+1} \big\|_{L^{q_0'} (0,T; L^{r_0'})}
    &\leq C_{d,p} \tau^{-\frac{1}{2}} \| u \|_{L^{q_2} (0,T; W^{s,r_2})}^{2p+1} \\
    &\leq C_{d,p} \tau^{-\frac{1}{2}} \| u \|_{L^{q_2} (0,T; W^{1,r_2})}^{2p+1}
    \leq C_{d,p} \tau^{-\frac{1}{2}} M_1^{2p+1}
    \end{aligned}
    $$
from \eqref{def_C_1}.
For the case that $0<p <\frac{3}{d}$, we set $r_1 = (2p+1)r_0'$ in \eqref{eq-(2p+1)}. Using H\"older's inequality in $t$-variable and the Sobolev embedding, we have that
    $$
    \begin{aligned}
    \big\| |\Pi_{\tau} u|^{2p+1} \big\|_{L^{q_0'} (0,T; L^{r_0'})}
    \leq T^{\frac{1}{q_0'}} \| u \|_{L^{\infty} (0,T; L^{(2p+1)r_0'})}^{2p+1}
    \leq C_{d,p} T^{\frac{1}{q_0'}} \| u \|_{L^{\infty} (0,T; H^{s})}^{2p+1} ,
    \end{aligned}
    $$
where
    $$
    s = \frac{d}{2} - \frac{d(p+1)}{(p+2)(2p+1)} .
    $$
Because $s \in [0,1]$ whenever $0\leq p \leq \frac{3}{d}$,
this is bounded by $C_{d,p} T^{\frac{1}{q_0'}} M_1^{2p+1}$ thanks to \eqref{def_C_1}.
Thus, we have the upper bound
    $$
    \big\| |\Pi_{\tau} u|^{2p+1} \big\|_{L^{q_0'} (0,T; L^{r_0'})}
    \leq C_{d,p} \big( \tau^{-\frac{1}{2}} + T^{\frac{1}{q_0'}} \big) M_1^{2p+1}
    $$
for all $0<p < p_d$.
%Here, we remark that the $T^{\frac{1}{q_0'}}$ factor is necessary because the pair $(q_2,r_2)$ is no longer the admissible pair when $0<p <\frac{3}{d}$.

Next, for the second term of left hand side in \eqref{Sov_2p+1},
we apply H\"older's inequality and Lemma \ref{lem-2-6} again to obtain
    $$
    \begin{aligned}
    \Big\| |\Pi_{\tau} u|^p \Pi_{\tau} (|u|^p u) \Big\|_{L^{q_0'} (0,T; L^{r_0'})}
    &\leq \big\| \Pi_{\tau} u \big\|_{L^{(2p+1)q_0'} (0,T; L^{(2p+1)r_0'})}^p
        \Big\| \Pi_{\tau} (|u|^p u) \Big\|_{L^{\frac{2p+1}{p+1}q_0'} (0,T; L^{\frac{2p+1}{p+1}r_0'})} \\
    &\leq C\tau^{-\frac{d}{2} (\frac{2p+1}{r_1}-\frac{1}{r_0'})} \| u \|_{L^{(2p+1)q_0'} (0,T; L^{r_1})}^p
        \big\| |u|^p u \big\|_{L^{\frac{2p+1}{p+1}q_0'} (0,T; L^{\frac{r_1}{p+1}})} \\
    &= C\tau^{-\frac{d}{2} (\frac{2p+1}{r_1}-\frac{1}{r_0'})} \| u \|_{L^{(2p+1)q_0'} (0,T; L^{r_1})}^{2p+1}
    \end{aligned}
    $$
for any $r_1 \leq (2p+1)r_0'$. The right hand side is same with \eqref{eq-(2p+1)}. Hence the desired estimate follows in the same way.
\end{proof}

In the last of section, we recall H\"older's inequality for $L^{q_0'}(I; L^{r_0'}(\mathbb{R}^d))$ which we use frequently:
    \begin{equation}\label{freq_Hol}
    \big\| |f|^{p}g \|_{L^{q_0'}(I; L^{r_0'})}
    \leq |I|^{\frac{1}{q_0'}-\frac{1}{q_0}} \| f \|_{L^{\infty} (I; L^{r_0})}^p \| g \|_{L^{q_0}(I; L^{r_0})}.
    \end{equation}

%%%%%%%%%%%%%%%%%%%%%%%%%%%%%%%%%%%%%%%%%%%%%%%%%%%%%%%%%%%%%%%%%%%%%%%%%%%%%%%%%%%%%%%%%%%%%%%%

\section{The $L^2$ convergence result between $Z_\tau$ with $u$}\label{sec-6}

In this section, we prove Theorem \ref{thm-rel}. The key ingredient is to use the Duhamel formulas of $Z_{\tau}$ and $u$ given in \eqref{eq-1-9} and \eqref{eq-1-8} respectively.

\begin{proof}[Proof of Theorem \ref{thm-rel}]
Let us fix a time $T \in (0,\infty)$ such that $\sup_{0\leq t \leq T}\|u(t)\|_{H^1} <\infty$.
Then, from Theorem \ref{wp_u_H12} and the assumption of Theorem \ref{thm-rel}, we see that $u$ and $Z_{\tau}$ satisfy the following estimates
    $$
	\begin{aligned}
	\|u\|_{L^\infty (0,T; H^1)} + \|u\|_{L^{q_0} (0,T; W^{1,r_0})}
	&\leq M_1, \\
	\| Z_{\tau}(n \tau)\|_{\ell^{\infty} (0,T; H^1)} +
	\| Z_{\tau}(n \tau)\|_{\ell^{q_0} (0,T; W^{1,r_0} )}
	&\leq M_2.
	\end{aligned}
    $$
For our purpose, it is sufficient to estimate $Z_{\tau}(n \tau) - \Pi_{\tau} u(n \tau)$ instead of $Z_{\tau}(n \tau) - u(n \tau)$, because we have that
    $$
    \big\| u(n \tau) - \Pi_{\tau} u(n \tau) \big\|_{\ell^{\infty} (0,T; L^2)}
    \leq C\tau^{1/2} \| u(n \tau)\|_{\ell^{\infty} (0,T; H^1)}
    \leq C\tau^{1/2} M_1
    $$
by \eqref{eq-2-20} and \eqref{def_C_1}.
Now, we take $T_*>0$ as
    \begin{equation}\label{eq-4-16}
    T_* = \alpha_{d,p} \big( M_1 + M_2 \big)^{-\frac{2p(p+2)}{4-(d-2)p}}
   % \quad\mbox{and}\quad
    %N := \frac{T}{T_*} \in \mathbb{N},
    \end{equation}
where a small constant $\alpha_{d,p}>0$ will be chosen later.
%To proceed an induciton, we split the interval $[0,T]$ into sub-intervals $I_j$, where $I_j :=[(j-1) T_*, j T_*]$ for $j =1, \cdots, N$.

First, for the case of $\tau \in [T_* /4,1)$, we can obtain the desired estimate easily as below:
    \begin{equation*}
    \begin{split}
    \max_{\tau \in \mathbb{N}} \|Z_{\tau}(n\tau) - u(n\tau)\|_{L^2}
    &\leq \max_{\tau \in \mathbb{N}} \|Z_{\tau}(n\tau) \|_{L^2}  +\max_{\tau \in \mathbb{N}} \|u(n\tau)\|_{L^2} \\
    &\leq 2\|\phi\|_{L^2} \leq \tau^{\frac{1}{2}} \Big(\frac{4}{\sqrt{T_*}}\Big) \|\phi\|_{L^2} \\
    &\leq \tau^{\frac{1}{2}} C_{d,q} \big( M_1 + M_2 \big)^{\frac{p(p+2)}{4-(d-2)p}}  \|\phi\|_{L^2}.
    \end{split}
    \end{equation*}

Now, assume $0< \tau < T_* /4$. We take $R \in (T_{*}/2, T_{*}]$ such that $R/\tau \in \mathbb{N}$. To proceed an induction, we split $[0,T]$ as
\begin{equation}
\begin{split}
[0,T]&= \cup_{j=0}^{N-1} [jR, (j+1)R) \, \cup \, [NR, T)
\\
&=: \cup_{j=0}^{N-1} I_j \, \cup \, I_N,
\end{split}
\end{equation}
where $N \in \mathbb{N}$ is chosen so that $NR \leq T< (N+1) R$. For each $j \in \{0,1, \cdots, N\}$ we choose $m_j \in \mathbb{N}$ such that $m_j \tau = jR$, i.e., $m_j = j (R/\tau)$. Then
    \begin{equation}\label{eq-5-57}
    \big\| Z_{\tau} - \Pi_{\tau} u \big\|_{\ell^{q} (I_{j}; L^{r})}= \Bigl\|Z_{\tau}(m_j \tau +n \tau) - \Pi_{\tau} u(m_j \tau +n \tau) \Bigr\|_{\ell^{q}(0, R; L^r)},
    \end{equation}
    where, if $j=N$, we regard the interval $(0,R)$ is replaced by $(0, T-NR) \subset (0,T)$. By considering $Z_{\tau}(m_j \tau)$ as initial data for each $j = 0,1,\cdots, N$, the formula \eqref{eq-1-9} can be written as
    \begin{equation}\label{eq-a-1'}
    Z_{\tau}(m_j\tau +n\tau)
    =S_{\tau}(n\tau) Z_{\tau}(m_j\tau) +
    \tau \sum_{k=0}^{n-1} S_{\tau}(n \tau -k \tau) \frac{N (\tau) - I}{\tau} Z_{\tau}(m_j\tau + k \tau), \quad n \geq 1.
    \end{equation}
By combining this with \eqref{eq-1-8}, we obtain the following decomposition:
    $$
    Z_{\tau}(m_j \tau +n \tau) - \Pi_{\tau} u(m_j \tau +n \tau)
    = \mathcal{A}_1(j) + \mathcal{A}_2(j) + \mathcal{A}_3(j) + \mathcal{A}_4(j) ,
    $$
where
    $$
    \begin{aligned}
    &\mathcal{A}_1(j) := S_{\tau} (n\tau)\Bigl( Z_{\tau}(m_j \tau) - \Pi_{\tau} u(m_j \tau)\Bigr)
    \\
    &\mathcal{A}_2(j) := S_{\tau}(n\tau) \Bigl( \Pi_{\tau} u (m_j \tau)-u(m_j \tau)\Bigr)
    \\
    &\mathcal{A}_3(j) := \tau \sum_{k=0}^{n-1} S_{\tau}(n \tau -k \tau) \left( \frac{N(\tau) - I}{\tau} Z_{\tau}(m_j\tau+k\tau)
        - \frac{N(\tau) - I}{\tau} \Pi_{\tau} u(m_j\tau+k\tau) \right)
    \\
    &\mathcal{A}_4(j) := \tau \sum_{k=0}^{n-1} S_{\tau}(n \tau -k \tau) \frac{N(\tau) - I}{\tau} \Pi_{\tau} u(m_j\tau+k\tau)
        - i\lambda\int_0^{n\tau} S_{\tau}(n\tau -s) |u|^p u (m_j\tau+s) ds.
    \end{aligned}
    $$
For $(q,r) \in \{(q_0, r_0), (\infty,2)\}$, we apply the triangle inequality to obtain that
    \begin{equation}\label{eq-5-40}
    \begin{split}
    &\Big\| Z_{\tau}(m_j \tau +n \tau) - \Pi_{\tau} u(m_j \tau +n \tau) \Big\|_{\ell^{q}(0, R; L^r)}
    \\
    &\quad\leq \| \mathcal{A}_1(j) \|_{\ell^{q}(0, R; L^r)} + \| \mathcal{A}_2(j) \|_{\ell^{q}(0, R; L^r)} + \| \mathcal{A}_3(j) \|_{\ell^{q}(0,R; L^r)} + \| \mathcal{A}_4(j) \|_{\ell^{q}(0,R; L^r)}.
    \end{split}
    \end{equation}
To estimate right hand side of \eqref{eq-5-40}, by using the Strichartz estimate \eqref{eq-st-4}, we get the estimate
    \begin{equation}\label{eq-5-42}
    \begin{split}
    \| \mathcal{A}_1(j) \|_{\ell^{q}(0, R; L^r)}
    &= \Bigl\|S_{\tau} (n\tau)\Bigl( Z_{\tau}(m_j \tau) - \Pi_{\tau} u(m_j \tau)\Bigr)\Bigr\|_{\ell^{q}(0,R; L^r)} \\
    &\leq C_{d,q} \Big\| Z_{\tau}(m_j \tau) - \Pi_{\tau} u (m_j \tau) \Big\|_{L^2}.
    \end{split}
    \end{equation}
Secondly, we use the inequality \eqref{eq-st-4}, \eqref{eq-2-20} with \eqref{def_C_1} to deduce that
    \begin{equation}\label{eq-5-42'}
    \begin{split}
    \| \mathcal{A}_2(j) \|_{\ell^{q}(0,R; L^r)}
    &= \Bigl\|S_{\tau}(n\tau) \Bigl( \Pi_{\tau} u(m_j \tau) - u (m_j \tau)\Bigr)\Bigr\|_{\ell^q (0,R; L^r)} \\
    &\leq C_{d,q} \big\| \Pi_{\tau} u(m_j \tau) - u (m_j \tau) \big\|_{L^2} \\
    &\leq C_{d,q} \tau^{1/2} \big\| (-\Delta)^{1/2} u(m_j \tau) \big\|_{L^2}
    \leq C_{d,q} \tau^{1/2} M_1.
    \end{split}
    \end{equation}
Next, to estimate $\mathcal{A}_3(j)$, we use \eqref{eq-2-7} to find that
    \begin{equation}\label{eq-6-5}
    \left| \frac{N(\tau) - I}{\tau} Z_{\tau} - \frac{N(\tau) - I}{\tau} \Pi_{\tau} u \right|
    \leq c_p |Z_{\tau} - \Pi_{\tau} u| \Big( |Z_{\tau}|^p +|\Pi_{\tau}u|^p \Big).
    \end{equation}
After applying the Strichartz estimate \eqref{eq-st-6} to $\mathcal{A}_3(j)$, we use \eqref{eq-6-5} with H\"older's inequality \eqref{freq_Hol}  and the fact that $R\leq T_*$ to find
    \begin{equation}\label{eq-6-2}
    \begin{split}
    \| \mathcal{A}_3(j) \|_{\ell^{q}(0, R; L^r)}
    &\leq C_{d,p}  \left\| |Z_{\tau} - \Pi_{\tau} u| \Big( |Z_{\tau}|^p +|\Pi_{\tau}u|^p \Big) (n\tau + m_j \tau)\right\|_{\ell^{q_0'} (0,R; L^{r_0'} )} \\
    &\leq C_{d,p} T_*^{\frac{1}{q_0'} -\frac{1}{q_0}} \big\| (Z_{\tau} - \Pi_{\tau} u)(n\tau + m_j \tau) \big\|_{\ell^{q_0} (0,R; L^{r_0})}
    \\
    &\quad \times \Big(  \big\| Z_{\tau}(n\tau + m_j \tau) \big\|_{\ell^{\infty} (0,R; L^{r_0})}^p + \big\| \Pi_{\tau}u (n\tau + m_j \tau) \big\|_{\ell^{\infty} (0,R; L^{r_0})}^p \Big).
    \end{split}
    \end{equation}
To proceed further, we use the Sobolev embedding $H^1 (\mathbb{R}^d) \rightarrow L^{r_0} (\mathbb{R}^d)$ (thanks to $r_0 <p_d +2$), \eqref{def_C_1} and \eqref{eq-5-51} to obtain
    $$
    \begin{aligned}
    \big\| \Pi_{\tau}u(n\tau + m_j \tau) \big\|_{\ell^{\infty} (0,R; L^{r_0})}
    \leq C_{d,p} \| u \|_{\ell^{\infty} (0,R; H^1)}
    &\leq C_{d,p} M_1, \\
    \big\| Z_{\tau}(n\tau + m_j \tau) \big\|_{\ell^{\infty} (0,R; L^{r_0})}
    \leq  C_{d,p} \| Z_{\tau} \|_{\ell^{\infty} (0,R; H^1)}
    &\leq C_{d,p} M_2.
    \end{aligned}
    $$
By inserting these estimates into \eqref{eq-6-2}, we get
    \begin{equation}\label{eq-r-34}
    \| \mathcal{A}_3(j) \|_{\ell^{q}(0, R; L^r)}
    \leq C_{d,p} ( M_1^p + M_2^p ) T_*^{\frac{4-(d-2)p}{2(p+2)}}
    \big\| (Z_{\tau} - \Pi_{\tau} u)(n\tau + m_j \tau) \big\|_{\ell^{q_0} (0,R; L^{r_0})},
    \end{equation}
where we  used the identity
    \begin{equation}\label{eq-r-54}
    \frac{1}{q_0'} - \frac{1}{q_0} = \frac{4-(d-2)p}{2(p+2)}.
    \end{equation}
Now, by choosing $\alpha_{d,p} >0$ small enough in \eqref{eq-4-16}, we deduce from \eqref{eq-r-34} the following estimate
    \begin{equation}\label{eq-5-53'}
    \| \mathcal{A}_3(j) \|_{\ell^{q}(0, R; L^r)}
    \leq \frac{1}{2} \big\| (Z_{\tau} - \Pi_{\tau} u)(n\tau + m_j \tau) \big\|_{\ell^{q_0} (0,R; L^{r_0})}.
    \end{equation}
Lastly, for $\mathcal{A}_4(j)$, we claim the following estimate:
    \begin{equation}\label{eq-5-53}
    \max_{j =0,1,\cdots,N} \Big\{
    \| \mathcal{A}_4(j) \|_{\ell^{\infty}(0, R; L^2)}
    + \| \mathcal{A}_4(j) \|_{\ell^{q_0}(0, R; L^{r_0})} \Big\}
    \leq C_{d,p}~ \tau^{1/2} \big( 1+ T_*^{\frac{1}{q_0'}} \big) \big( 1+ M_1^{2p+1} \big),
    \end{equation}
whose verification is given in Lemma \ref{lem-K1} below. Now we are ready to finish the proof. From \eqref{eq-5-42}, \eqref{eq-5-42'}, \eqref{eq-5-53'} and \eqref{eq-5-53}, we have
    \begin{equation}\label{eq-r-60}
    \begin{split}
    &\Bigl\|Z_{\tau}(m_j \tau +n \tau) - \Pi_{\tau} u(m_j \tau +n \tau) \Bigr\|_{\ell^{q}(0,R; L^r)}
    \\
    &\quad\leq C_{d,q} \big\| Z_{\tau}(m_j \tau) - \Pi_{\tau} u (m_j \tau)\big\|_{L^2} +\frac{1}{2}\Bigl\|Z_{\tau}(m_j \tau +n \tau) - \Pi_{\tau} u(m_j \tau +n \tau) \Bigr\|_{\ell^{q_0}(0,R; L^{r_0})} \\
    &\qquad + C_{d,p}~ \tau^{1/2} \big( 1+ T_*^{\frac{1}{q_0'}} \big) \big( 1+ M_1^{2p+1} \big).
    \end{split}
    \end{equation}
Since \eqref{eq-r-60} holds for $(q,r) \in \{(q_0, r_0), (\infty,2)\}$,
we get
    $$
    \begin{aligned}
    &\Bigl\|Z_{\tau}(m_j \tau +n \tau) - \Pi_{\tau} u(m_j \tau +n \tau) \Bigr\|_{\ell^{\infty}(0, R; L^{2})} \\
    &\qquad\leq  C_{d,p}\big\| Z_{\tau}(m_j \tau) - \Pi_{\tau} u (m_j \tau)\big\|_{L^2}
    + C_{d,p}~ \tau^{1/2} \big( 1+ T_*^{\frac{1}{q_0'}} \big) \big( 1+ M_1^{2p+1} \big).
    \end{aligned}
    $$
Then, by utilizing \eqref{eq-5-57}, we arrive at the following estimates:
    $$
    \Biggl\{\begin{aligned}
    &\big\| Z_{\tau} - \Pi_{\tau} u \big\|_{\ell^{\infty} (I_{0}; L^{2})}
    \leq C_{d,p}~ \tau^{1/2} \big( 1+ T_*^{\frac{1}{q_0'}} \big) \big( 1+ M_1^{2p+1} \big) ,
    \\
    &\big\| Z_{\tau} - \Pi_{\tau} u \big\|_{\ell^{\infty} (I_{j+1}; L^{2})}
    \leq C_{d,p}\big\| Z_{\tau} - \Pi_{\tau} u \big\|_{\ell^{\infty} (I_{j}; L^{2})}
    + C_{d,p}~ \tau^{1/2} \big( 1+ T_*^{\frac{1}{q_0'}} \big) \big( 1+ M_1^{2p+1} \big),
    \end{aligned}\Biggr.
    $$
for $j=0,1, \cdots, N-1$.
Inductively, this implies that
    $$
    \big\| Z_{\tau} - \Pi_{\tau} u \big\|_{\ell^{\infty} (I_{j}; L^2)}
    \leq (C_{d,p})^{j+1}~ \tau^{1/2} \big( 1+ T_*^{\frac{1}{q_0'}} \big) \big( 1+ M_1^{2p+1} \big)
    $$
for all $j=0,1,\cdots,N$.
Since $N \leq 2T/T_*$ with $T_*$ given in \eqref{eq-4-16}, we get
    %$$
    %\begin{aligned}
    %\big\| Z_{\tau} - \Pi_{\tau} u \big\|_{\ell^{\infty} (0,T; L^2)}
    %&\leq \tau^{1/2} \exp\Big( \frac{T \ln C_{d,p}}{\alpha_{d,p}} \big( M_1 + M_2 \big)^{\frac{2p(p+2)}{4-(d-2)p}} \Big) \big( 1+ T_*^{\frac{1}{q_0'}} \big) \big( 1+ M_1^{2p+1} \big) \\
    %&\leq \tau^{1/2} \exp\Big( \widetilde{C}_{d,p} T \big( M_1 + M_2 \big)^{\frac{2p(p+2)}{4-(d-2)p}} \Big)
    %\end{aligned}
    %$$
        $$
    \begin{aligned}
    \big\| Z_{\tau} - \Pi_{\tau} u \big\|_{\ell^{\infty} (0,T; L^2)}
    &\leq \tau^{1/2} \big( M_1 + M_2 \big)^{\frac{2p(p+2)}{4-(d-2)p}}  \sum_{j=0}^{[2T/T^*]} (C_{d,p})^{j+1}
    \\
    &\leq \tau^{1/2} \exp\Big( \widetilde{C}_{d,p} T \big( M_1 + M_2 \big)^{\frac{2p(p+2)}{4-(d-2)p}} \Big)
    \end{aligned}
    $$
with a suitable constant $\widetilde{C}_{d,p}>0$, which completes the proof of the theorem.
\end{proof}

In the remaining part of this section,  we prove the estimate \eqref{eq-5-53} used in the above proof.

%%%%%%%%%%%%%%%%%%%%%%%%%%%%%%%%%%%%%%%%%%%%%%%%%%%%%%%%%%%%%%%%%%%%%%%%%%%%%%%%%%%%%%%%%%%%%%%%%%%%%%%%%%%%%%%

\begin{lem}\label{lem-K1}
Under the assumption \eqref{def_C_1}, we have
    \begin{equation}\label{eq-5-10K}
    \begin{split}
    &\left\|  \tau \sum_{k=0}^{n-1} S_{\tau}(n \tau -k \tau) \frac{N(\tau)-I}{\tau} \Pi_{\tau} u (k \tau)
        - i\lambda\int_0^{n\tau} S_{\tau}(n\tau -s) |u|^p u (s) ds\right\|_{\ell^q (0, T; L^r)} \\
    &\qquad\leq C_{d,p}~ \tau^{1/2} \big( 1+ T^{\frac{1}{q_0'}} \big) \big( 1+ M_1^{2p+1} \big).
    \end{split}
    \end{equation}
for all admissible pairs $(q,r)$ and $\tau \in (0,1)$.
Here, the constant $M_1 > 0$ is referred to \eqref{def_C_1}.
\end{lem}

\begin{proof}
Recall that $(q_0,r_0)$ denote the admissible pair $(\frac{4(p+2)}{dp},p+2)$.
Also, we denote that
    $$
    \begin{aligned}
    &\mathcal{B}_1(u) := \frac{N(\tau) -I}{\tau} \Pi_{\tau} u (s)
    \\
    &\mathcal{B}_2(u) := \frac{N(\tau) - I}{\tau} \Pi_{\tau} u -i\lambda |\Pi_{\tau}u|^p \Pi_{\tau}u
    \\
    &\mathcal{B}_3(u) := i\lambda\Big( |\Pi_{\tau}u|^p \Pi_{\tau}u - |u|^p u \Big)
    \end{aligned}
    $$
Now, in order to show \eqref{eq-5-10K}, we perform the decomposition
    \begin{equation}\label{eq-lem-K1}
    \begin{aligned}
    &\tau \sum_{k=0}^{n-1} S_{\tau}(n \tau -k \tau) \frac{N(\tau)-I}{\tau} \Pi_{\tau} u (k \tau)
        - i\lambda\int_0^{n\tau} S_{\tau}(n\tau -s) |u|^p u (s) \,ds \\
    &\quad = \bigg( \tau \sum_{k=0}^{n-1} S_{\tau}(n \tau -k \tau) \frac{N(\tau)-I}{\tau} \Pi_{\tau} u (k \tau)
        - \int_0^{n\tau} S_{\tau}(n\tau -s) \frac{N(\tau) -I}{\tau} \Pi_{\tau} u (s) \,ds \bigg)\\
    &\qquad\quad + \bigg( \int_0^{n\tau} S_{\tau}(n\tau -s) \frac{N(\tau) -I}{\tau} \Pi_{\tau} u (s)\,ds
        - i\lambda\int_0^{n\tau} S_{\tau}(n\tau -s) |u|^p u (s)\,ds \bigg).
    \end{aligned}
    \end{equation}
By Lemma \ref{lem-5-2}, we can find an upper bound of the first term of \eqref{eq-lem-K1} as
    $$
    \begin{aligned}
    &\left\| \tau \sum_{k=0}^{n-1} S_{\tau}(n \tau -k \tau) \frac{N(\tau)-I}{\tau} \Pi_{\tau} u (k \tau)
        - \int_0^{n\tau} S_{\tau}(n\tau -s) \frac{N(\tau) -I}{\tau} \Pi_{\tau} u (s) ds \right\|_{\ell^q (0, T; L^r)} \\
    &\qquad\leq C\,\tau^{1/2} \left\| \frac{N(\tau) -I}{\tau} \Pi_{\tau} u (s) \right\|_{L^{{q_0}'}(0,T; W^{1,{r_0}'})}
        + C\,\tau \left\| \frac{N(\tau) -I}{\tau} \Pi_{\tau} u (s) \right\|_{W^{1,{q_0}'}(0,T; L^{{r_0}'})} \\
    &\qquad= C \tau^{1/2} \left\| \mathcal{B}_1(u) \right\|_{L^{{q_0}'}(0,T; W^{1,{r_0}'})}
        + C \tau \left\| \mathcal{B}_1(u) \right\|_{W^{1,{q_0}'}(0,T; L^{{r_0}'})}
    \end{aligned}
    $$
for all admissible pair $(q,r)$.
On the other hand, for the second term of \eqref{eq-lem-K1}, we begin the proof by splitting
    $$
    \begin{aligned}
    \frac{N(\tau) - I}{\tau} \Pi_{\tau} u -i\lambda |u|^p u
    &= \Big( \frac{N(\tau) - I}{\tau} \Pi_{\tau} u -i\lambda |\Pi_{\tau}u|^p \Pi_{\tau}u \Big)
     + i\lambda\Big( |\Pi_{\tau}u|^p \Pi_{\tau}u - |u|^p u \Big) \\
    &= \mathcal{B}_2(u) + \mathcal{B}_3(u).
    \end{aligned}
    $$
Then, we can apply \eqref{eq-2-6} to obtain that
    $$
    \begin{aligned}
    &\left\| \int_{0}^{n\tau} S_{\tau}(n \tau -s) \frac{N(\tau) - I}{\tau} \Pi_{\tau} u(s)ds
        - i\lambda\int_0^{n\tau} S_{\tau}(n\tau -s) |u|^p u (s) ds \right\|_{\ell^q (0, T; L^r)}\\
    &\qquad\leq \left\| \int_{0}^{n\tau} S_{\tau}(n \tau -s) \mathcal{B}_2(u) ds \right\|_{\ell^q (0, T; L^r)}
        + \left\| \int_{0}^{n\tau} S_{\tau}(n \tau -s) \mathcal{B}_3(u) ds \right\|_{\ell^q (0, T; L^r)} \\
    &\qquad\leq C\big\| \mathcal{B}_2(u) \big\|_{L^{q_0'} (0, T; L^{r_0'})}
        + C\big\| \mathcal{B}_3(u) \big\|_{L^{q_0'} (0, T; L^{r_0'})}.
    \end{aligned}
    $$
for all admissible pair $(q,r)$.
Thus, it is enough to show that
    \begin{equation}\label{eq-lem-K111}
    \begin{aligned}
    &\tau^{1/2} \big\| \mathcal{B}_1(u) \big\|_{L^{{q_0}'}(0,T; W^{1,{r_0}'})}
    + \tau \big\| \mathcal{B}_1(u) \big\|_{W^{1,{q_0}'}(0,T; L^{{r_0}'})}
    + \big\| \mathcal{B}_2(u) \big\|_{L^{q_0'} (0, T; L^{r_0'})}
    + \big\| \mathcal{B}_3(u) \big\|_{L^{q_0'} (0, T; L^{r_0'})} \\
    &\qquad\qquad\leq C_{d,p}~ \tau^{1/2} \big( 1+ T^{\frac{1}{q_0'}} \big) \big( 1+ M_1^{2p+1} \big) .
    \end{aligned}
    \end{equation}

For the first term on the right hand side of \eqref{eq-lem-K111},
we apply Lemma \ref{lem-2-1}, H\"older's inequality \eqref{freq_Hol}, Lemma \ref{lem-2-6} and Sobolev's embedding  $H^1 (\mathbb{R}^d) \rightarrow L^{r_0} (\mathbb{R}^d)$ to obtain that
    $$
    \begin{aligned}
    \big\| \mathcal{B}_1(u) \big\|_{L^{{q_0}'}(0,T; W^{1,{r_0}'})}
    &= \left\| \frac{N(\tau) -I}{\tau} \Pi_{\tau} u (s) \right\|_{L^{q_0'}(0,T; W^{1,r_0'})} \\
    &\leq \Big\| |\Pi_{\tau} u|^{p+1} \Big\|_{L^{q_0'}(0,T; L^{r_0'})}
        + C_p \Big\| |\Pi_{\tau} u|^{p} |\nabla \Pi_{\tau} u| \Big\|_{L^{q_0'}(0,T; L^{r_0'})}
        \\
    &\leq C_p T^{\frac{1}{q_0'}-\frac{1}{q_0}} \big\| \Pi_{\tau}u \big\|_{L^{\infty} (0,T; L^{r_0})}^p
        \Big( \big\| \Pi_{\tau}u \big\|_{L^{q_0}(0,T; L^{r_0})} + \big\| \nabla \Pi_{\tau}u \big\|_{L^{q_0}(0,T; L^{r_0})} \Big) \\
    &\leq C_{d,p} T^{\frac{1}{q_0'}-\frac{1}{q_0}} \| u \|_{L^{\infty} (0,T; H^1)}^p \| u \|_{L^{q_0}(0,T; W^{1,r_0})},
    \end{aligned}
    $$
which is then bounded by $C_{d,p} T^{\frac{1}{q_0'}-\frac{1}{q_0}} M_1^{p+1}$, owing to Theorem \ref{wp_u_H12}.\\

Next, we estimate the second term on the right hand side of \eqref{eq-lem-K111}.
Similarly to \eqref{eq-2-13}, we deduce that
    \begin{equation}\label{eq-5-3}
    \begin{split}
    \big\| \mathcal{B}_1(u) \big\|_{W^{1,{q_0}'}(0,T; L^{{r_0}'})}
    &= \left\| \partial_t \bigg( \frac{N(\tau) -I}{\tau} \Pi_{\tau} u (s) \bigg) \right\|_{L^{q_0'}(0,T; L^{r_0'})}
    \\
    &\leq c_p \Big\| |\Pi_{\tau}u|^p \partial_t \Pi_{\tau} u (s) \Big\|_{L^{q_0'}(0,T; L^{r_0'})}
    \\
    &\leq  c_p \Big\| |\Pi_{\tau}u|^p  \Pi_{\tau} \Delta u  \Big\|_{L^{q_0'}(0,T; L^{r_0'})}
        +  c_p \Big\| |\Pi_{\tau}u|^p \Pi_{\tau} (|u|^p u)  \Big\|_{L^{q_0'}(0,T; L^{r_0'})},
    \end{split}
    \end{equation}
where the second inequality follows from the identities $\partial_t \Pi_{\tau} u = \Pi_{\tau} \partial_t u$ and $\partial_t u = i \Delta u + i\lambda |u|^{p} u$. By H\"older's inequality \eqref{freq_Hol} and Lemma \ref{lem-2-6}, the first term on the right hand side of \eqref{eq-5-3} is bounded by
    $$
    \begin{aligned}
    \Big\| |\Pi_{\tau}u|^p  \Pi_{\tau} \Delta u  \Big\|_{L^{q_0'}(0,T; L^{r_0'})}
    &\leq \big\| \Pi_{\tau}u \big\|_{L^{q_0}(0,T; L^{r_0})}^p \big\| \Pi_{\tau} \Delta u  \big\|_{L^{{q_0}}(0,T; L^{r_0})} \\
    &\leq C_{d,p} \tau^{-\frac{1}{2}} T^{\frac{1}{q_0'}-\frac{1}{q_0}} \big\| \Pi_{\tau}u \big\|_{L^{\infty}(0,T; L^{r_0})}^p \big\| \Pi_{\tau}\nabla u \big\|_{L^{{q_0}}(0,T; L^{r_0})},
    \end{aligned}
    $$
which is bounded by $C_{d,p} \tau^{-\frac{1}{2}} T^{\frac{1}{q_0'}-\frac{1}{q_0}} M_1^{p+1}$ owing to Theorem \ref{wp_u_H12}.
Also, for the second term of \eqref{eq-5-3}, we can apply Lemma \ref{LemSov_2p+1}, and the term is bounded by $C_{d,p}~ \tau^{-1/2} \big( 1 + T^{\frac{1}{q_0'}} \big) M_1^{2p+1}$.\\

For the third term of \eqref{eq-lem-K111}, we recall that $N(\tau) a = e^{i\tau \lambda |a|^{p}}a$ for $a \in \mathbb{C}$. Then using the mean value theorem, one has
    $$
    \big| \mathcal{B}_2(u) \big|
    = \bigg|\frac{N(\tau) - I}{\tau} \Pi_{\tau} u -i\lambda |\Pi_{\tau}u|^p \Pi_{\tau}u \bigg|
    \leq C \tau |\Pi_{\tau}u|^{2p+1}.
    $$
Thus, we have
    \begin{equation}
    \begin{split}
    \big\| \mathcal{B}_2(u) \big\|_{L^{q_0'} (0, T; L^{r_0'})}
    \leq C\tau \big\| |\Pi_{\tau} u|^{2p+1} \big\|_{L^{q_0'} (0, T; L^{r_0'})}
    \end{split}
    \end{equation}
Then, by Lemma \ref{LemSov_2p+1}, this term is bounded by $C_{d,p}~ \tau^{1/2} \big( 1 + T^{\frac{1}{q_0'}} \big) M_1^{2p+1}$.\\

For the last term of \eqref{eq-lem-K111}, we proceed as follows:
    $$
    \begin{aligned}
    \big\| \mathcal{B}_3(u) \big\|_{L^{q_0'} (0, T; L^{r_0'})}
    &=\Big\| |\Pi_{\tau}u|^p \Pi_{\tau}u - |u|^p u \Big\|_{L^{q_0'} (0, T; L^{r_0'})} \\
    &\leq C_{d,p} T^{\frac{1}{q_0'}-\frac{1}{q_0}} \big\| \Pi_{\tau}u - u \big\|_{L^{q_0} (0, T; L^{r_0})}
        \Big( \| \Pi_{\tau}u \|_{L^{\infty} (0, T; L^{r_0})}^p + \| u \|_{L^{\infty} (0, T; L^{r_0})}^p \Big)\\
    &\leq C_{d,p} \tau^{1/2} T^{\frac{1}{q_0'}-\frac{1}{q_0}} \| u \|_{L^{q_0} (0, T; W^{1,r_0})}  \| u \|_{L^{\infty} (0, T; H^1)}^p,
    \end{aligned}
    $$
where we have used Lemma \ref{lem-2-6} and Sobolev's embedding.
This is bounded by $C \tau^{1/2} T^{\frac{1}{q_0'}-\frac{1}{q_0}} M_1^{p+1}$ from Theorem \ref{wp_u_H12}.

Collecting the above estimates, we obtain \eqref{eq-lem-K111}. The proof is complete.
\end{proof}

%%%%%%%%%%%%%%%%%%%%%%%%%%%%%%%%%%%%%%%%%%%%%%%%%%%%%%%%%%%%%%%%%%%%%%%%%%%%%%%%%

\section{Global $H^1$ stability of $Z_{\tau}$ for $0<p<\frac{4}{d}$}\label{sec-4}

In this section, we firstly prove the local $H^1$ stability result on the scheme $Z_{\tau}$ in the space $\ell^{q}(n\tau \in I; W^{1,r}(\mathbb{R}^d))$ with initial data $\phi$ in $H^1 (\mathbb{R}^d)$ and $p \in (0, p_{d})$. After that, we will prove Theorem \ref{thm-9} for the case that $0<p<\frac{4}{d}$. The proof for this case will be derived by combining the global $L^2$ stability result (see Lemma \ref{prop-r-1}) and the local $H^1$ stability of $Z_{\tau}$ (see Proposition \ref{thm-3}).
\

By considering the discrete Strichartz estimate \eqref{eq-st-7}, we can take $\mathbf{C}=C(d,p) \in [1,\infty)$ such that
    \begin{equation}\label{def_C(d,p)}
    \mathbf{C}= \max\Big\{ \sup_{\tau\in (0,1)} \sup_{\phi \in H^1} \frac{\|S_{\tau} \phi \|_{\ell^{q_0}(\tau\mathbb{Z}, W^{1,r_0})} + \|S_{\tau} \phi \|_{\ell^{\infty}(\tau \mathbb{Z}, H^1)}}{ \|\phi\|_{{H}^1}}, ~1 \Big\}.
    \end{equation}
Then, we have the following proposition.

\begin{prop}[Local $H^1$ stability]\label{thm-3} Let $d \geq 1$ and $0<p< p_{d}$, and suppose that $\phi \in H^1 (\mathbb{R}^d)$. Then there exists a constant $\beta_{d,p}>0$ such that the $Z_{\tau}$ satisfies
    \begin{equation}\label{eq-1-1}
    \| Z_{\tau}(n \tau)\|_{\ell^q (0,T_0; W^{1,r} )}
    \leq 4 \mathbf{C} \|\phi\|_{H^1 (\mathbb{R}^d)} \quad\mbox{for all}\quad \tau \in (0,1),
    \end{equation}
where $(q,r) \in \{(q_0, r_0), (\infty,2)\}$  and $T_0>0$ is defined by
    \begin{equation}\label{eq-r-1}
    T_{0} = \beta_{d,p} \|\phi\|_{{H}^1}^{-\frac{2p(p+2)}{4-(d-2)p}}.
    \end{equation}
\end{prop}

\begin{proof}
To obtain the estimate \eqref{eq-1-1}, we consider the following set
    \begin{equation}\label{eq-r-31}
    \Lambda = \left\{ N \in \mathbb{N}\cup\{0\} ~:~
    \|Z_{\tau}(k\tau)\|_{\ell^{q_0} (0, N\tau ; W^{1,r_0})} + \| Z_{\tau} (k\tau)\|_{\ell^{\infty}(0, N\tau ; H^1)}
    \leq 4 \mathbf{C} \|\phi\|_{{H}^1}  \right\}.
    \end{equation}
If $\Lambda$ is an infinite set, then \eqref{eq-1-1} follows trivially.
Therefore, we suppose that $\Lambda$ is a finite set, and let $N_*$ be the largest element of $\Lambda$. It is then sufficient to find a lower bound on $N_*$, as the form of $N_* \geq T_0 /\tau$ for $T_0 >0$ defined in \eqref{eq-r-1} with a suitable choice of $\beta_{d,p}>0$.
\

First we verify that the set $\Lambda$ is non-empty.
Indeed, by the definitions of $Z_{\tau}$ and $\mathbf{C}$ given in \eqref{eq-r-30} and \eqref{def_C(d,p)} respectively, we find that
    $$
    \begin{aligned}
    \tau^{\frac{1}{q_0}} \| Z_{\tau}(0) \phi\|_{W^{1,r_0}} + \| Z_{\tau}(0) \phi\|_{H^{1}}
    &=\tau^{\frac{1}{q_0}} \| S_{\tau} (0) \phi \|_{W^{1,r_0}} +  \| S_{\tau} (0) \phi \|_{H^{1}} \\
    &\leq \| S_{\tau}(\tau \cdot) \phi \|_{\ell^{q_0}(\tau\mathbb{Z}; W^{1,r_0})} + \| S_{\tau}(\tau \cdot) \phi \|_{\ell^{\infty}(\tau\mathbb{Z}; H^{1})} \\
    &\leq \mathbf{C} \|\phi\|_{{H}^1},
    \end{aligned}
    $$
which means that $0 \in \Lambda$.
\

Let $(q,r)$ denote either $(q_0, r_0)$ or $(\infty,2)$. Then, using the Duhamel formula \eqref{eq-1-9} we have
    \begin{equation}\label{eq-3-2}
    \begin{split}
    &\left( \tau\sum_{n=0}^{N_*+1} \| Z_{\tau}(n \tau)\|_{W^{1,r}}^{q}\right)^{1/q} \\
    &\quad \leq \big\| S_{\tau}(n \tau) \phi \big\|_{\ell^{q} (0 \leq n \tau \leq (N_*+1)\tau; W^{1,r})}
        + \left\| \tau \sum_{k=0}^{n-1} S_{\tau} (n\tau -k\tau)
        \frac{N(\tau) - I}{\tau} Z_{\tau}(k \tau) \right\|_{\ell^{q} (\tau \leq n \tau \leq (N_* +1) \tau; W^{1,r} )} \\
    &\quad \leq \mathbf{C} \|\phi\|_{{H}^1}
        + \left\| \tau \sum_{k=0}^{n-1} S_{\tau} (n\tau -k\tau)
        \frac{N(\tau) - I}{\tau} Z_{\tau}(k \tau) \right\|_{\ell^{q} (\tau \leq n \tau \leq (N_* +1) \tau; W^{1,r} )},
    \end{split}
    \end{equation}
where \eqref{def_C(d,p)} is used for the second inequality.
We can bound the last term of \eqref{eq-3-2} by applying the Strichartz estimate \eqref{eq-st-8} as follows:
    \begin{equation}\label{def_C_2}
    \begin{aligned}
    &\left\| \tau \sum_{k=0}^{n-1} S_{\tau} (n\tau -k\tau) \frac{N(\tau) - I}{\tau} Z_{\tau}(k \tau) \right\|_{\ell^{q} (\tau \leq n \tau \leq (N_* +1) \tau; W^{1,r} )} \\
    &\qquad\leq C_{d,p} \left\| \frac{N(\tau) -I}{\tau} Z_{\tau}(n \tau) \right\|_{\ell^{q_0'} (0 \leq n \tau \leq N_* \tau; W^{1,r_0'})}.
    \end{aligned}
    \end{equation}
To estimate the right hand side of \eqref{def_C_2}, we apply Lemma \ref{lem-2-1} and H\"older's inequality \eqref{freq_Hol}. Then,
    $$
    \begin{aligned}
    &\left\| \frac{N(\tau) -I}{\tau} Z_{\tau}(n \tau) \right\|_{\ell^{q_0'} (0 \leq n \tau \leq N_* \tau; W^{1,r_0'})} \\
    &\quad \leq \left\| \frac{N(\tau) -I}{\tau} Z_{\tau}(n \tau) \right\|_{\ell^{q_0'} (0 \leq n \tau \leq N_* \tau; L^{r_0'})}
        + \left\| \nabla \left(\frac{N(\tau) -I}{\tau} Z_{\tau}(n \tau)\right) \right\|_{\ell^{q_0'} (0 \leq n \tau \leq N_* \tau; L^{r_0'})} \\
    &\quad \leq \big\| |Z_{\tau} (n\tau)|^{p+1} \big\|_{\ell^{q_0'} (0 \leq n \tau \leq N_* \tau; L^{r_0'})}
        + (p+1)\Big\| |Z_{\tau} (n\tau)|^p |\nabla Z_{\tau}(n\tau)| \Big\|_{\ell^{q_0'} (0 \leq n \tau \leq N_* \tau; L^{r_0'})} \\
    &\quad \leq (N_* \tau)^{\frac{1}{q_0'} - \frac{1}{q_0}} \| Z_{\tau} (n\tau) \|_{\ell^{\infty} (0 \leq n \tau \leq N_* \tau; L^{r_0})}^{p} \\
    &\qquad \times \Big( \| Z_{\tau}(n\tau) \|_{\ell^{q_0} (0 \leq n \tau \leq N_* \tau; L^{r_0})}
    + (p+1) \big\| \nabla Z_{\tau}(n\tau) \big\|_{\ell^{q_0} (0 \leq n \tau \leq N_* \tau; L^{r_0})} \Big).
    \end{aligned}
    $$
To proceed further, we utilize the Sobolev embedding $H^1 (\mathbb{R}^d) \rightarrow L^{r_0} (\mathbb{R}^d)$ and the fact that $N_* \in \Lambda$. Then we get
    \begin{equation}\label{eq-3-2-1}
    \begin{aligned}
    &\left\| \frac{N(\tau) -I}{\tau} Z_{\tau}(n \tau) \right\|_{\ell^{q_0'} (0 \leq n \tau \leq N_* \tau; W^{1,r_0'})} \\
    &\quad\leq C_{d,p} (N_* \tau)^{\frac{1}{q_0'} - \frac{1}{q_0}} \| Z_{\tau} (n\tau) \|_{\ell^{\infty} (0 \leq n \tau \leq N_* \tau; H^1)}^{p} \| Z_{\tau}(n\tau) \|_{\ell^{q_0} (0 \leq n \tau \leq N_* \tau; W^{1,r_0})} \\
    &\quad\leq C_{d,p} (N_*\tau)^{\frac{1}{q_0'}-\frac{1}{q_0}}  \Big( 4 \mathbf{C} \|\phi\|_{{H}^1} \Big)^{p+1} .
    \end{aligned}
    \end{equation}
Combining estimates \eqref{def_C_2} and \eqref{eq-3-2-1} in \eqref{eq-3-2}, we obtain
    $$
    \| Z_{\tau}(n \tau) \|_{\ell^{q} (0 \leq n \tau \leq (N_* +1) \tau ; W^{1,r} )}
    ~\leq~ \mathbf{C} \|\phi\|_{{H}^1} + C_{d,p} (N_*\tau)^{\frac{1}{q_0'}-\frac{1}{q_0}}  \Big( 4 \mathbf{C} \|\phi\|_{{H}^1} \Big)^{p+1}
    $$
for all $(q,r) \in \{(q_0, r_0), (\infty,2)\}$.
Consequently,
    \begin{equation}\label{eq-r-35}
    \begin{aligned}
    &    \| Z_{\tau}(n \tau)\|_{\ell^{\infty} (0\leq n\tau \leq (N_* +1) \tau; H^1)}
        + \| Z_{\tau}(n \tau) \|_{\ell^{q_0} (0 \leq n \tau \leq (N_* +1) \tau ; W^{1,r_0} )} \\
    &\qquad\leq 2 \mathbf{C} \|\phi\|_{{H}^1} + 2C_{d,p} (N_*\tau)^{\frac{1}{q_0'}-\frac{1}{q_0}}  \Big( 4\mathbf{C} \|\phi\|_{{H}^1} \Big)^{p+1}.
    \end{aligned}
    \end{equation}
This estimate yields that $N_*$ obeys the following estimate
    \begin{equation}\label{eq-r-32}
    2 C_{d,p} (N_*\tau)^{\frac{1}{q_0'} - \frac{1}{q_0}} \big( 4 \mathbf{C} \|\phi\|_{{H}^1} \big)^{p+1}
    \geq \mathbf{C} \|\phi\|_{{H}^1}.
    \end{equation}
Indeed, if \eqref{eq-r-32} does not hold, then it follows directly from \eqref{eq-r-35} that $N_* +1 \in \Lambda$, in view of definition \eqref{eq-r-31}. However, it is impossible by the maximality of $N_*$. Thus \eqref{eq-r-32} is true, and hence
\begin{equation*}
N_{*}\tau \geq \left( \frac{1}{2C_{d,p} 4^{p+1} \mathbf{C}^p \|\phi\|_{H_1}^p}\right)^{\frac{2(p+2)}{4-(d-2)p}},
\end{equation*}
where we  used the identity \eqref{eq-r-54}.
This shows that \eqref{eq-1-1} is true with the choice of $\beta_{d,p}= \big( 2 C_{d,p} 4^{p+1} \mathbf{C}^p  \big)^{-\frac{2(p+2)}{4-(d-2)p}}$ in \eqref{eq-r-1}.
The proof is finished.
\end{proof}

\begin{rem}\label{rem-r-4}
In the proof above, we note that the estimate \eqref{eq-1-1} holds for any admissible $(q,r)$, i.e.,
    $$
    \| Z_{\tau}(n \tau)\|_{\ell^q (0,T_0; W^{1,r} )}
    \leq C_q \|\phi\|_{H^1 (\mathbb{R}^d)}\quad\mbox{for all}\quad \tau \in (0,1).
    $$
Indeed, estimate \eqref{eq-3-2} holds for any admissible pair and its right hand side is bounded as in \eqref{def_C_2}.
\end{rem}

Before to show the global $H^1$ stability of $Z_{\tau}$ for the case of $0<p<\frac{4}{d}$,
we recall the $L^2$ stability result on $Z_{\tau}$ from \cite{I}.

\begin{lem}[Theorem 1.1 in \cite{I}, pages 3030--3032 in detail]\label{prop-r-1}
For $d\geq1$, $0 < p < \frac{4}{d}$ and $\phi \in L^2 (\mathbb{R}^d)$, there exist a constant $\widetilde{\beta}_{d, p} >0$ and a time $\widetilde{T}_0 = \widetilde{\beta}_{d,p} \|\phi\|_{L^2}^{-\frac{4p}{4-dp}}$ such that
    $$
    \|Z_{\tau}(n\tau)\|_{\ell^{q_0} (k\tau,k\tau + \widetilde{T}_0; L^{r_0})} \leq C_{d,p} \|\phi\|_{L^2}
    $$
holds  for all $k\in \mathbb{N}\cup \{0\}$.
\end{lem}

Now we are ready to prove the global $H^1$ stability of $Z_{\tau}$ for the mass-subcritical case.

\begin{proof}[Proof of Theorem \ref{thm-9}]
Consider $\phi \in H^1 (\mathbb{R}^d)$ and the solution $u$ to \eqref{eq-main} with initial data $\phi$. Let us set $T_1 >0$ by
    \begin{equation}\label{eq-r-52}
    T_1 := \gamma_{d,p} \min\big\{ T_0, \widetilde{T}_0 \big\}
    = \gamma_{d,p} \min\Big\{ \beta_{d,p} \|\phi\|_{H^1}^{-\frac{2p(p+2)}{4-(d-2)p}},~\widetilde{\beta}_{d, p} \|\phi\|_{L^2}^{-\frac{4p}{4-dp}}\Big\},
    \end{equation}
where a constant $\gamma_{d,p} \in(0,1)$ will be choose later.

In the case of $\tau \in [T_1 /2,1)$,
by H\"older's inequality in $t$ and \eqref{eq-2-21}, \eqref{eq-2-22} in Lemma \ref{lem-2-6}, we have
    \begin{equation*}
    \begin{aligned}
    \|Z_{\tau} (n \tau)\|_{\ell^{q} (n\tau \in [0,T]; W^{1,r})}
    &\leq \Big( \frac{2T}{\tau}\Big)^{\frac{1}{q}} \sup_{n\tau \in [0,T]} \|Z_{\tau}(n\tau)\|_{W^{1,r}} \\
    &\leq C_{d,q} T^{\frac{1}{q}} \tau^{-\frac{1}{q}} \tau^{-\frac{1}{2}} \tau^{\frac{d}{2} \left( \frac{1}{r} - \frac{1}{2}\right)} \sup_{n\tau \in [0,T]} \|Z_{\tau}(n\tau)\|_{L^2} \\
    &= C_{d,q} T^{\frac{1}{q}} \tau^{-\frac{1}{2}} \| \phi \|_{L^2}.
    \end{aligned}
    \end{equation*}
where we used also the definition of admissible pairs $\frac{2}{q} + \frac{d}{r}=\frac{d}{2}$ and the $L^2$ norm of $Z_{\tau}$ does not increase which obtained by Ignet \cite{I}, that is,
    \begin{equation*}
    \sup_{n\tau \in [0,\infty)} \|Z_{\tau}(n\tau)\|_{L^2}\leq \| \phi \|_{L^2} .
    \end{equation*}
Since $\tau \geq T_1$, we have
    \begin{equation*}
    \begin{aligned}
    \|Z_{\tau} (n \tau)\|_{\ell^{q} (n\tau \in [0,T]; W^{1,r})}
    \leq C_{d,q} T^{\frac{1}{q}} \tau^{-\frac{1}{2}} \Big( \frac{\tau}{T_1} \Big) \| \phi \|_{L^2} \\
    \leq C_{d,q} \tau^{1/2} T^{\frac{1}{q}} \big( \| \phi \|_{H^1} \big)^{C_{d,p}},
    \end{aligned}
    \end{equation*}
which proves the theorem for the case $\tau \in [T_1 /2,1)$.
\medskip

Now, we consider $\tau \in (0, T_1 /2)$ and we shoose a value $R \in (T_1 /2, T_1]$ such that $R/\tau \in \mathbb{N}$. We set $I_j = [jR, (j+1)R)$ for $j\in \mathbb{N} \cup \{0\}$.  For each $j \in \mathbb{N} \cup \{0\}$ we choose $m_j \in \mathbb{N}$ such that $m_j \tau = jR$, i.e., $m_j = j (R/\tau)$. Then
    \begin{equation}\label{eq-5-58}
    \| Z_{\tau}(n \tau) \|_{\ell^{q}(I_{j}; W^{1,r})} \leq \big\| Z_{\tau}(m_j \tau +n \tau) \big\|_{\ell^{q}(0,R; W^{1,r})}
    \end{equation}
for any admissible pair $(q,r)$. By applying the Strichartz estimates of Corollary \ref{thm-str2} to the Duhamel formula of $Z_{\tau} (m_j \tau +n\tau)$, we obtain
    \begin{equation}\label{eq-K1}
    \begin{split}
    &\| Z_{\tau}(m_j \tau +n \tau) \|_{\ell^{q}(0,R; W^{1,r})} \\
    &\quad\leq \big\| S_{\tau}(n \tau) Z_{\tau}(m_j \tau) \big\|_{\ell^{q} (0,R; W^{1,r})}
        + \left\| \tau \sum_{k=0}^{n-1} S_{\tau}(n \tau -k\tau) \frac{N (\tau) - I}{\tau} Z_{\tau}(m_j \tau + k \tau) \right\|_{\ell^{q} (0,R; W^{1,r})} \\
    &\quad\leq C_{d,q} \| Z_{\tau}(m_j \tau) \|_{{H}^1}
        + C_{d,q,p} \left\| \frac{N(\tau) -I}{\tau} Z_{\tau}(m_j \tau+ n \tau) \right\|_{\ell^{q_0'} (0,R; W^{1,r_0'})},
    \end{split}
    \end{equation}
where we can estimate the last term using Lemma \ref{lem-2-1} and H\"older's inequality as
    $$
    \begin{aligned}
    &\left\| \frac{N(\tau) -I}{\tau} Z_{\tau}(m_j \tau + n \tau) \right\|_{\ell^{q_0'} (0,R; W^{1,r_0'})} \\
    &\quad \leq \left\| \frac{N(\tau) -I}{\tau} Z_{\tau}(m_j \tau +n \tau) \right\|_{\ell^{q_0'} (0,R; L^{r_0'})}
        + \left\| \nabla \left(\frac{N(\tau) -I}{\tau} Z_{\tau}(m_j \tau +n \tau)\right) \right\|_{\ell^{q_0'} (0,R; L^{r_0'})} \\
    &\quad \leq L^{1-\frac{dp}{4}} \big\| Z_{\tau} (m_j \tau+n\tau) \big\|_{\ell^{q_0} (0,R; L^{r_0})}^{p} \\
    &\qquad\qquad \times \Big( \big\| Z_{\tau}(m_j \tau +n\tau) \big\|_{\ell^{q_0} (0,R; L^{r_0})} + (p+1) \big\| \nabla Z_{\tau}(m_j \tau +n\tau) \big\|_{\ell^{q_0} (0,R; L^{r_0})} \Big),
    \end{aligned}
    $$
where we used also the equalities
    \begin{equation*}
    \frac{1}{r_0'} = \frac{p+1}{r_0} \quad\mbox{and}\quad
    \frac{1}{q_0'} - \frac{(p+1)}{q_0} = 1-\frac{dp}{4}.
    \end{equation*}
By applying Lemma \ref{prop-r-1}, we estimate the right hand side as follows:
    $$
    \begin{aligned}
    &\left\| \frac{N(\tau) -I}{\tau} Z_{\tau}(m_j \tau + n \tau) \right\|_{\ell^{q_0'} (0,R; W^{1,r_0'})} \\
    &\quad \leq C_{d,p} \,L^{1-\frac{dp}{4}} \|\phi\|_{L^2}^{p}   \Big( \big\| Z_{\tau}(m_j \tau +n\tau) \big\|_{\ell^{q_0} (0,R; L^{r_0})} + \big\| \nabla Z_{\tau}(m_j \tau +n\tau) \big\|_{\ell^{q_0} (0,R; L^{r_0})} \Big)\\
    &\quad \leq C_{d,p} \Big( (2\gamma_{d,p} \widetilde{\beta}_{d, p})^{1-\frac{dp}{4}} \|\phi\|_{L^2}^{-p}\Big) \|\phi\|_{L^2}^{p}   \big\| Z_{\tau}(m_j \tau +n\tau) \big\|_{\ell^{q_0} (0,R; W^{1,r_0})}.
    \end{aligned}
    $$
Insert this estimate into \eqref{eq-K1}. Then, choosing $\gamma_{d,p}>0$ smaller in \eqref{eq-r-52} if necessary, we arrive at the following estimate:
    \begin{equation}\label{eq-c-20}
    \big\| Z_{\tau}(m_j \tau + n\tau) \big\|_{\ell^{q}(0,R; W^{1,r})}
    \leq C_{d,q} \big\| Z_{\tau}(n \tau) \big\|_{\ell^{\infty}(I_{j-1}; H^1)}
        + \frac{1}{2} \big\| Z_{\tau}(m_j \tau +n \tau) \big\|_{\ell^{q_0}(0,R; W^{1,r_0})}
    \end{equation}
    for any $j \in \mathbb{N}$.
This estimate with $(q,r) = (q_0, r_0)$ yields
    $$
    \big\| Z_{\tau}(m_j \tau + n\tau) \big\|_{\ell^{q_0}(0,R; W^{1,r_0})}
    \leq 2C_{d,p} \| Z_{\tau}(n \tau) \|_{L^{\infty}(I_{j-1}; H^1)}.
    $$
By inserting this back into \eqref{eq-c-20} and using \eqref{eq-5-58}, we obtain
    \begin{equation}\label{eq-c-21}
    \| Z_{\tau}(n \tau) \|_{\ell^{q}(I_{j}; W^{1,r})} \leq 2C_{d,p} \| Z_{\tau}(n \tau) \|_{\ell^{\infty}(I_{j-1}; H^1)},
    \end{equation}
for any admissible pair $(q,r)$ and $j \in \mathbb{N}$. By applying this with $(q,r)= (\infty,2)$, we finally deduce that for any $T>0$,
    $$
    \begin{aligned}
    \| Z_{\tau}(n \tau) \|_{\ell^{\infty}(0,T; H^{1})}
    & \leq \sum_{j=0}^{[2T/T_1]} \| Z_{\tau}(n \tau) \|_{\ell^{\infty}(I_j; H^{1})}
    \\
    &\leq \Biggl(\sum_{j=0}^{[2T/T_1]} (2C_{d,p})^j\Biggr) \| Z_{\tau}(n \tau) \|_{\ell^{\infty}(I_0; H^1)}
    \\
    &\leq \exp\Big( \frac{2T}{T_1} \log(2C_{d,p}) \Big) \|\phi \|_{H^1},
    \end{aligned}
    $$
where we have used Proposition \ref{thm-3} for the final inequality. Combining this with \eqref{eq-c-21} we complete the proof of Theorem \ref{thm-9}.
Indeed, we get an upper bound
    $$
    \| Z_{\tau}(n \tau) \|_{\ell^{q}(0,T; W^{1,r})}
    \leq \exp\Big( C_{d,p}T  \max\big\{ \|\phi\|_{H^1}^{\frac{2p(p+2)}{4-(d-2)p}}, ~\|\phi\|_{L^2}^{\frac{4p}{4-dp}}\big\} \Big).
    $$
The proof is finished.
\end{proof}

%%%%%%%%%%%%%%%%%%%%%%%%%%%%%%%%%%%%%%%%%%%%%%%%%%%%%%%%%%%%%%%%%%%%%%%%%%%%%%%%%

\section{More lemmas for the case of $p \geq1$.}\label{sec-10'}

In Section \ref{sec-10'}-\ref{sec-10}, we precisely write $Z_{\tau}^{\phi}$ and $u^{\phi}$ to denote the flow $Z_{\tau}$ and the solution $u$ corresponding to the initial data $\phi$. This notation will help to make clear our argument.
Firstly, we introduce well-known well-posedness theory on \eqref{eq-main} for $p \geq 1$ as follows.

\begin{theoremalpha}\label{wp_u_H12'}
[Theorem 5.3.1 in \cite{Ca}.] Let $d \geq 1$, $1 \leq p < p_{d}$, and $(q,r)$ be any admissible pair.
For any quantity $M \geq 1$, there is a time $T_0 = c_{d,p} M^{-\frac{2p(p+2)}{4-(d-2)p}}$ with some absolute constant $c_{d,p}>0$ such that the following statements hold:
    \begin{itemize}
    \item If $\phi_1, \phi_2 \in H^1 (\mathbb{R}^d)$ with $\| \phi_1 \|_{H^1}, \| \phi_1 \|_{H^1} \leq M$, there is a constant $C_{d,p}>0$ such that
    \begin{equation}\label{eq-c-1}
    \|u^{\phi_1}-u^{\phi_2}\|_{L^{q}(0,T_0; W^{1,r})}
    \leq C_{d,p} \|\phi_1 - \phi_2\|_{H^1}.
    \end{equation}
    \item If $\psi \in H^2 (\mathbb{R}^d)$ with $\| \psi \|_{H^1} \leq M$, there is a constant $M_3 =M_3(d,p,M,\psi)> 0$ such that
    \begin{equation}\label{def_M_2}
    \|u^{\psi}\|_{L^\infty (0,T_0; H^2)} + \|u^{\psi}\|_{L^q (0,T_0; W^{2,r})}
    \leq M_3.
    \end{equation}
    \end{itemize}
\end{theoremalpha}

In the following result, we obtain stability of $Z_{\tau}^{\phi}$ similarly to that of $u^{\phi}$ given in Theorem \ref{wp_u_H12'}. It will be essential for the global $H^1$ stability of $Z_\tau$ for the energy-subcritical case.

\begin{prop}\label{prop-3-3}
Let $d \geq 1$, $1 \leq p < p_{d}$, and $(q,r)$ be any admissible pair.
For any $M \geq 1$, there is a constant $\beta_{d,p}>0$
    \begin{equation}\label{def_T_*}
    T_2 := \beta_{d,p} M^{-\frac{4p(p+2)}{4-(d-2)p}} (< T_0)
    \end{equation}
such that the following statements hold:
    \begin{itemize}
    \item If $\phi_1, \phi_2 \in H^1 (\mathbb{R}^d)$ with $\| \phi_1 \|_{H^1}, \| \phi_1 \|_{H^1} \leq M$, there is a constant $C_{d,p} \geq 1$ such that
    \begin{equation}\label{eq-K-10}
    \big\| Z_{\tau}^{\phi_1}(n \tau) - Z_{\tau}^{\phi_2}(n \tau) \big\|_{\ell^q (0, T_2; W^{1,r} )}
    \leq C_{d,p} \|\phi_1 - \phi_2\|_{H^1 (\mathbb{R}^d)}
    \end{equation}
    for any $\tau \in (0,1)$.
    \item If $\psi \in H^2 (\mathbb{R}^d)$ with $\| \psi \|_{H^1} \leq M$, there is a constant $C(d,p,M,\psi)> 0$ such that
    \begin{equation}\label{thm-7}
    \big\|Z_{\tau}^{\psi}(n \tau)- u^{\psi} (n\tau) \big\|_{\ell^q (0, T_2; W^{1,r} )} \leq \tau^{1/2} C(d,p,M,\psi)
    \end{equation}
    for any $\tau \in (0,1)$.
    \end{itemize}
\end{prop}
The proof of \eqref{eq-K-10} is motivated by  the proof of \eqref{eq-c-1}, where we  apply the Duhamel-type formular \eqref{eq-1-9} for $Z_\tau^{\phi}$ instead of the Duhamel-type formular \eqref{eq-1-8} for $u^{\phi}$.
The idea of the proof of \eqref{thm-7} is  similar to that of the proof of \eqref{rel-goal} in Theorem \ref{thm-rel}.

To establish Proposition \ref{prop-3-3}, we introduce a technical lemma.
\begin{lem}\label{lem-a-1}
Suppose that $d \geq 1$ and $1 \leq p < p_{d}$, and let $(q_0, r_0)$ denote the admissible pair $(\frac{4(p+2)}{dp}, p+2)$. Then, for any time interval $I$ and functions $v,w: I \times \mathbb{R}^{d}\rightarrow\mathbb{C}$, we have
    $$
    \begin{aligned}
    &\left\| \frac{N(\tau)-I}{\tau} v - \frac{N(\tau)-I}{\tau} w \right\|_{\ell^{q_0'} (I; W^{1,r_0'})} \\
    &\quad\leq C |I|^{\frac{1}{q_0'} - \frac{1}{q_0}} \| v-w\|_{\ell^{\infty} (I; H^1)} \biggl[\Big( \| v \|_{\ell^{q_0} (I; W^{1,r_0})} + \| w \|_{\ell^{q_0} (I; W^{1,r_0})} \Big) \Big( \|v\|_{\ell^{\infty} (I; H^1)}^{p-1} + \|w\|_{\ell^{\infty} (I; H^1)}^{p-1} \Big) \Bigr.
    \\
    &\qquad\qquad\qquad\qquad\qquad\qquad\qquad\qquad\qquad +\Bigl. \tau \|\nabla w\|_{\ell^{q_0}(I; L_x^{r_0})} \big\| |w|^{2p-1} \big\|_{\ell^{\infty}(I; L^{\frac{r_0}{p-1}})}\biggr].
    \end{aligned}
    $$
\end{lem}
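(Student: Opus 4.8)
The plan is to reduce everything to pointwise estimates on the nonlinear multiplier, and then distribute norms via Hölder's inequality on the interval $J$ and in space. First I would split the left-hand side into a "difference-of-multipliers" term and a "commutator with the gradient" term. Writing $F(v) = \frac{N(\tau)-I}{\tau}v = \frac{\exp(i\tau\lambda|v|^p)-1}{\tau}v$, Lemma \ref{lem-2-1} already gives $|F(a)-F(b)| \leq c_p |a-b|(|a|^p+|b|^p)$. For the gradient, I would differentiate and organize $\nabla(F(a)-F(b))$ into a piece controlled by $|a-b|(|a|^p + |b|^p)(|\nabla a| + |\nabla b|)$ together with a remainder piece that carries an explicit factor of $\tau$; the latter is where the term $\tau\|\nabla b\|_{\ell^{q_0}(J;L^{r_0})}\||b|^{2p-1}\|_{\ell^\infty(J;L^{r_0/(p-1)})}$ originates. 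Concretely, after expanding $\nabla\bigl(\exp(i\tau\lambda|v|^p)v\bigr)$ one isolates terms in which $\exp(i\tau\lambda|v|^p)-1$ multiplies $\nabla v$ (gaining $\tau$ via \eqref{eq-2-10}) versus terms where the derivative hits $|v|^p$ inside the exponent (these are the $O(1)$ ones). The difference of the $O(1)$ parts between $a$ and $b$ is handled by the mean value theorem, producing the factor $|a-b|$ with weights $(|a|^{p}+|b|^{p})$ or $(|a|^{p-1}+|b|^{p-1})$ on the gradients; because $p\geq 1$ these powers are nonnegative, so the algebra is legitimate.

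Next I would collect these pointwise bounds and integrate. For the main term one estimates, for each fixed $n\tau\in J$,
$$
\bigl\| |a-b|\,(|a|^{p}+|b|^{p})\,(|\nabla a|+|\nabla b|)\bigr\|_{L^{r_0'}} \leq \|a-b\|_{L^{\infty}_x\text{-type}}\cdots,
$$
but rather than $L^\infty_x$ I would use the Sobolev embedding $H^1(\mathbb{R}^d)\hookrightarrow L^{s}(\mathbb{R}^d)$ for an appropriate $s$ (valid for $1\le d\le 3$ and the given range of $p$, exactly as in the local well-posedness argument behind Lemma \ref{wp_u_H12}) to trade $\|a-b\|_{L^{s}}$ and the weight $\|(|a|^{p-1}+|b|^{p-1})\|$-factors for $\|a-b\|_{H^1}$ and $\|a\|_{H^1}^{p-1}+\|b\|_{H^1}^{p-1}$, leaving one factor of $\nabla a$ or $\nabla b$ in $L^{r_0}$. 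The exponent bookkeeping is precisely the one encoded by \eqref{eq-r-80}, $\frac{1}{r_0'} = \frac{p+1}{r_0}$, which is why the admissible pair $(q_0,r_0)=(\tfrac{4(p+2)}{dp},p+2)$ is chosen; this makes the spatial Hölder split close with the weights landing in $L^{r_0}$ and $L^{r_0/(p-1)}$ as stated. Then I sum over $n\tau\in J$ in $\ell^{q_0'}$, pulling the $\ell^\infty_n$ norms of the $H^1$ factors out and applying the discrete Hölder inequality in $n$ to convert the remaining $\ell^{q_0}$ sum of $\nabla$-factors into the $\ell^{q_0}(J;L^{r_0})$ norm, at the cost of the volume factor $|J|^{1/q_0' - 1/q_0}$ (this is just $\ell^{q_0'}\subset \ell^{q_0}$ on a finite set of cardinality $|J|/\tau$, combined with the $\tau$-weight in the $\ell$-norms). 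The same scheme applies verbatim to the $\tau$-remainder term, except that there only $b$ appears and one weight is $2p-1 = p + (p-1)$, so it splits as $|\nabla b|$ in $L^{r_0}$ times $|b|^{2p-1}$ in $L^{r_0/(p-1)}$; note $\frac{1}{r_0} + \frac{p-1}{r_0} = \frac{p}{r_0} \le \frac{1}{r_0'}$ leaves exactly the right room, and the explicit $\tau$ in front is the gain from \eqref{eq-2-10}.

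The main obstacle is the gradient estimate: one must differentiate the composition $v\mapsto\exp(i\tau\lambda|v|^p)v$ carefully, since $|v|^p$ is only Lipschitz-type (not smooth) when $p<1$ — but the hypothesis $p\geq 1$ rescues this, making $|v|^p$ continuously differentiable in $v$ as a map $\mathbb{C}\to\mathbb{R}$ with $|\nabla(|v|^p)|\lesssim |v|^{p-1}|\nabla v|$. The delicate point is then to split $\nabla(F(a)-F(b))$ so that \emph{every} term either carries a factor $|a-b|$ (for the bulk) or a factor $\tau$ (for the $b$-only remainder), with no leftover $O(1)$ terms lacking $|a-b|$; this is exactly the combinatorial heart of the proof and must be done by writing $\exp(i\tau\lambda|a|^p)-\exp(i\tau\lambda|b|^p) = \bigl(e^{i\tau\lambda|a|^p}-1\bigr)-\bigl(e^{i\tau\lambda|b|^p}-1\bigr)$ and using, as appropriate, either $|e^{i\theta}-1|\le|\theta|$ or $|e^{i\theta_1}-e^{i\theta_2}|\le|\theta_1-\theta_2|$, whichever supplies the needed factor. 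Once the pointwise inequality is in hand, the rest is a routine, if lengthy, application of Hölder in space and time together with Sobolev embedding, and I would present it compactly.
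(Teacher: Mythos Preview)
Your approach is essentially the same as the paper's: expand $\nabla F(v)$ explicitly, take the difference, and split the resulting terms into ones carrying $|a-b|$ or $|\nabla a-\nabla b|$ (the ``bulk'') and one carrying an explicit factor of $\tau$, then close with H\"older in space--time and the Sobolev embedding $H^1\hookrightarrow L^{r_0}$. One small point of confusion: the $\tau$-remainder does \emph{not} come from the factor $\frac{e^{i\tau\lambda|v|^p}-1}{\tau}$ multiplying $\nabla v$ (by \eqref{eq-2-10} that factor is only $O(|v|^p)$, no $\tau$ gain); rather it arises from the term $|b|^p\nabla b\,(e^{i\tau\lambda|a|^p}-e^{i\tau\lambda|b|^p})$ after writing $e^{i\tau\lambda|a|^p}|a|^p\nabla a - e^{i\tau\lambda|b|^p}|b|^p\nabla b = (|a|^p\nabla a-|b|^p\nabla b)e^{i\tau\lambda|a|^p}+|b|^p\nabla b(e^{i\tau\lambda|a|^p}-e^{i\tau\lambda|b|^p})$, and then using $|e^{i\theta_1}-e^{i\theta_2}|\leq|\theta_1-\theta_2|$ as you note at the end---so your H\"older split for the $\tau$-term should also include the factor $|a-b|$ in $L^{r_0}$, which is exactly what makes $\frac{1}{r_0}+\frac{1}{r_0}+\frac{p-1}{r_0}=\frac{p+1}{r_0}=\frac{1}{r_0'}$ close.
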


\begin{proof}
The proof basically follows from Lemma \ref{lem-2-1} and estimates in \cite[Section 4.4]{Ca}. Firstly, we apply \eqref{eq-2-7} and H\"older's inequality, to deduce that
    $$
    \begin{aligned}
    &\left\| \frac{N(\tau)-I}{\tau} v - \frac{N(\tau)-I}{\tau} w \right\|_{\ell^{q_0'} (I; L^{r_0'})} \\
    &\quad\leq C |I|^{\frac{1}{q_0'} - \frac{1}{q_0}} \| v-w\|_{\ell^{\infty} (I; L^{r_0})} \Big( \|v\|_{L^{q_0} (I; L^{r_0})} \|v\|_{\ell^{\infty} (I; L^{r_0})}^{p-1} + \|w\|_{L^{q_0} (I; L^{r_0})} \|w\|_{\ell^{\infty} (I; L^{r_0})}^{p-1} \Big) \\
    &\quad\leq C |I|^{\frac{1}{q_0'} - \frac{1}{q_0}} \| v-w\|_{\ell^{\infty} (I; H^1)} \Big( \|v\|_{\ell^{q_0} (I; L^{r_0})} \|v\|_{\ell^{\infty} (I; H^1)}^{p-1} + \|w\|_{\ell^{q_0} (I; L^{r_0})} \|w\|_{\ell^{\infty} (I; H^1)}^{p-1} \Big),
    \end{aligned}
    $$
where the Sobolev embedding is used for the last inequality, with the fact that $r_0 = p+2 < p_d +2$.

Next, by differentiating and rearranging, we have that
    $$
    \begin{aligned}
    &\nabla \left( \frac{N(\tau)-I}{\tau} v\right) -\nabla \left( \frac{N(\tau)-I}{\tau} w\right) \\
    &\quad= \nabla \left( \frac{e^{i\tau \lambda |v|^p}-1}{\tau} v\right) -\nabla \left( \frac{e^{i\tau \lambda |w|^p}-1}{\tau} w\right) \\
    &\quad= i \lambda p\Bigl(e^{i\tau \lambda |v|^{p}}|v|^{p} \nabla v -e^{i\tau \lambda |w|^p}  |w|^p \nabla w  \Bigr) \\
    &\qquad\qquad+ \left[\left( \frac{e^{i\tau \lambda |v|^p}-1}{\tau} \right) -\left( \frac{e^{i\tau \lambda |w|^p}-1}{\tau} \right) \right]\nabla v + \left( \frac{e^{i\tau \lambda |w|^p}-1}{\tau} \right)( \nabla v - \nabla w),
    \end{aligned}
    $$
which together with Lemma \ref{lem-2-1} yields that
    \begin{equation}\label{eq-a-11}
    \begin{split}
    &\biggl\| \nabla \left( \frac{N(\tau)-I}{\tau} v\right) -\nabla \left( \frac{N(\tau)-I}{\tau} w\right) \biggr\|_{\ell^{q_0'}(I; L^{r_0'})} \\
    &\qquad \leq p\Big\| e^{i\tau \lambda |v|^{p}}|v|^{p} \nabla v - e^{i\tau \lambda |w|^{p}} |w|^p \nabla w \Big\|_{\ell^{q_0'}(I; L^{r_0'})} \\
    &\qquad\qquad + \Big\|\big( |v|^{p}- |w|^{p} \big) |\nabla v|\Big\|_{\ell^{q_0'}(I; L^{r_0'})}
    + \Big\| |w|^{p}| \nabla w - \nabla v| \Big\|_{\ell^{q_0'}(I; L^{r_0'})} .
    \end{split}
    \end{equation}
Using the following identity
    \begin{equation}\label{eq-r-12}
    \begin{aligned}
    &e^{i\tau \lambda |v|^{p}}|v|^{p} \nabla v - e^{i\tau \lambda |w|^{p}} |w|^p \nabla w \\
    &\quad= \big( |v|^{p} \nabla v - |w|^p \nabla w \big) e^{i\tau \lambda |v|^p} +  |w|^p \nabla w \left( e^{i\tau \lambda |v|^p} - e^{i\tau \lambda|w|^p}\right) ,
    \end{aligned}
    \end{equation}
we can decompose the first term on the right hand side of \eqref{eq-a-11} into
    \begin{equation}\label{eq-r-90}
    \begin{aligned}
    &\big\| e^{i\tau \lambda |v|^{p}}|v|^{p} \nabla v - e^{i\tau \lambda |w|^{p}} |w|^p \nabla w \big\|_{\ell^{q_0'} (I; L^{r_0'})} \\
    &\qquad \leq \big\| |v|^p (\nabla v - \nabla w) \big\|_{\ell^{q_0'} (I; L^{r_0'})}
    + \big\| (|v|^p - |w|^p) \nabla w \big\|_{\ell^{q_0'} (I; L^{r_0'})} \\
    &\qquad\qquad + \Big\| |w|^p \nabla w \big( e^{i\tau \lambda |v|^p} - e^{i\tau \lambda|w|^p} \big) \Big\|_{\ell^{q_0'} (I; L^{r_0'})}.
    \end{aligned}
    \end{equation}
From H\"older's inequality and the Sobolev embedding, we estimate the first right term of \eqref{eq-r-90} as
    $$
    \begin{aligned}
    \big\| |v|^p (\nabla v - \nabla w) \big\|_{\ell^{q_0'}(I; L^{r_0'})}
    &\leq |I|^{\frac{1}{q_0'}} \big\| |v|^p \big\|_{\ell^{\infty} (I; L^{r_0/p})} \big\| \nabla v- \nabla w\|_{\ell^{\infty} (I; L^2)} \\
    &\leq C |I|^{\frac{1}{q_0'}} \|v\|_{\ell^{\infty} (I; H^1)}^p \|\nabla v- \nabla w\|_{\ell^{\infty} (I; L^2)} .
    \end{aligned}
    $$
Also, since $p \geq 1$, we have an inequality
    $$
    \big| |v|^p - |w|^p \big|
    \leq C|v-w| \big(|v|^{p-1} + |w|^{p-1} \big)\quad \forall~v,w \in \mathbb{C}.
    $$
Using this we estimate the second right term of \eqref{eq-r-90} as
    \begin{equation}\label{eq-r-91}
    \begin{aligned}
    & \big\| (|v|^p - |w|^p) \nabla w \big\|_{\ell^{q_0'} (I; L^{r_0'})}\\
    &\quad\leq |I|^{\frac{1}{q_0'} - \frac{1}{q_0}} \| v-w\|_{\ell^{\infty} (I; L^{r_0})} \Big\| |v|^{p-1} + |w|^{p-1} \Big\|_{\ell^{\infty} (I; L^{\frac{r_0}{p-1}})} \| \nabla w \|_{\ell^{q_0} (I; L^{r_0})} \\
    &\quad\leq C |I|^{\frac{1}{q_0'} - \frac{1}{q_0}} \| v-w\|_{\ell^{\infty} (I; H^1)} \| \nabla w \|_{\ell^{q_0} (I; L^{r_0})} \Big( \|v\|_{\ell^{\infty} (I; L^{r_0})}^{p-1} + \|w\|_{\ell^{\infty} (I; L^{r_0})}^{p-1} \Big) .
    \end{aligned}
    \end{equation}
For the third term, we notice that
    \begin{equation*}
    \Big| |w|^p \nabla w \left( e^{i\tau \lambda |v|^p} - e^{i\tau \lambda|w|^p}\right)\Big|
    \leq C \tau|w|^{p} |\nabla w|\left(|v|^{p-1} + |w|^{p-1} \right) |v-w|.
    \end{equation*}
Then, similarly to \eqref{eq-r-91} we obtain
    \begin{equation*}
    \begin{aligned}
    &\Big\| |w|^p \nabla w \big( e^{i\tau \lambda |v|^p} - e^{i\tau \lambda|w|^p} \big) \Big\|_{\ell^{q_0'} (I; L^{r_0'})} \\
    &\qquad\leq  C \tau |I|^{\frac{1}{q_0'} - \frac{1}{q_0}} \|v-w\|_{\ell^{\infty} (I; H^1)} \|\nabla w \|_{\ell^{q_0}(I; L_x^{r_0})} \left\| (|v|+|w|)^{2p-1}\right\|_{\ell^{\infty}(I; L^{\frac{r_0}{p-1}})}.
    \end{aligned}
    \end{equation*}
Combination of the estimates above gives the desired bound for the first term of \eqref{eq-a-11}, with help of the Sobolev embedding $H^1 (\mathbb{R}^d) \hookrightarrow L^{r_0} (\mathbb{R}^d)$.
The second and third terms of the right hand side of \eqref{eq-a-11} can be estimated as we did for the right hand sides of \eqref{eq-r-90}. The proof is done.
\end{proof}

Now, we ready to show Proposition \ref{prop-3-3}.

\begin{proof}[Proof of Proposition \ref{prop-3-3}]
In the proof, we utilize the admissible pair $(q_0, r_0)=(\frac{4(p+2)}{dp}, p+2)$. We divide the proof into two parts corresponding to \eqref{eq-K-10} and \eqref{thm-7}. \\

\textbf{Proof of \eqref{eq-K-10}.}
Let $\phi_1$ and $\phi_2 \in H^1 (\mathbb{R}^d)$ such that $\|\phi_1\|_{H^1} \leq M$ and $\|\phi_2\|_{H^1} \leq M$. We consider the difference between the Duhamel formulas of $Z_{\tau}^{\phi_1}$ and $Z_{\tau}^{\phi_2}$ provided by \eqref{eq-1-9}.
Then by applying the Strichartz estimate \eqref{eq-st-6}, we have
    \begin{equation}\label{eq-5-20}
    \begin{split}
    &\big\| Z_{\tau}^{\phi_1} (n \tau)-Z_{\tau}^{\phi_2} (n \tau) \big\|_{\ell^q (0, T_2; W^{1,r} )} \\
    &\quad= \left\| S_{\tau}(n \tau) (\phi_1 - \phi_2)
        ~+~ \tau \sum_{k=0}^{n-1} S_{\tau} \big((n-k) \tau \big)
       \biggl( \frac{N(\tau) - I}{\tau} Z_{\tau}^{\phi_1}- \frac{N(\tau) - I}{\tau}Z_{\tau}^{\phi_2}\biggr)(k \tau) \right\|_{\ell^q (0, T_2; W^{1,r} )} \\
    &\quad\leq C\| \phi_1 - \phi_2 \|_{H^1 (\mathbb{R}^d)}
        + C \left\| \frac{N(\tau) - I}{\tau} Z_{\tau}^{\phi_1}- \frac{N(\tau) - I}{\tau}Z_{\tau}^{\phi_2} \right\|_{\ell^{q_0'} (0, T_2; W^{1,r_0'} )}
    \end{split}
    \end{equation}
     for any admissible pair $(q,r)$.
By the local $H^1$ stability of Proposition \ref{thm-3} with choosing $\beta_{d,p} >0$ in \eqref{def_T_*} small enough, for $(q,r) \in \{(q_0, r_0), (\infty,2)\}$, we have
    \begin{equation}\label{eq-r-10}
    \| Z_{\tau}^{\phi_j}(n \tau)\|_{\ell^q (0,T_2; W^{1,r} )}
    \leq \| Z_{\tau}^{\phi_j}(n \tau)\|_{\ell^q (0,T_0; W^{1,r} )}
    \leq 4 \mathbf{C}M, \quad j=1,2,
    \end{equation}
since $T_2 <T_0$.
We proceed to use bound \eqref{eq-r-10} to estimate the last term of \eqref{eq-5-20}. By applying Lemma \ref{lem-a-1} and the Sobolev embedding in \eqref{eq-5-20} along with \eqref{eq-r-10}, we obtain that
    \begin{equation}\label{eq-3-10}
    \begin{split}
    &\big\| Z_{\tau}^{\phi_1} (n \tau) -Z_{\tau}^{\phi_2} (n \tau) \big\|_{\ell^q (0, T_2; W^{1,r} )} \\
    &\leq  C \| \phi_1 - \phi_2 \|_{H^1 (\mathbb{R}^d)} \\
    &\quad +  C T_2^{\frac{1}{q_0'} - \frac{1}{q_0}}
        \big\| Z_{\tau}^{\phi_1} (n \tau) -Z_{\tau}^{\phi_2} (n \tau) \big\|_{\ell^{\infty} (0, T_2; H^1 )}
        \bigg( (4\mathbf{C}M)^p + \tau (4\mathbf{C}M) \big\| |Z_{\tau}^{\phi_2}|^{2p-1} \big\|_{\ell^{\infty} (0,T_2; L^{\frac{r_0}{p-1}})} \bigg).
    \end{split}
    \end{equation}
To estimate the \eqref{eq-3-10}, we apply \eqref{eq-2-22} in Lemma \ref{lem-2-6} to get
    \begin{equation*}
    \|Z_{\tau}^{\phi_2}(n\tau)\|_{L^{\frac{(2p-1)(p+2)}{p-1}}}^{2p-1}   \leq C \tau^{-\frac{d}{2}\left(\frac{1}{p+2} - \frac{1}{p+2} \frac{(p-1)}{(2p-1)}\right) (2p-1)} \|Z_{\tau}^{\phi_2}(n\tau)\|_{L^{p+2}}^{2p-1}.
    \end{equation*}
Here, $\frac{d}{2}\big(\frac{1}{p+2} - \frac{1}{p+2} \frac{(p-1)}{(2p-1)}\big) (2p-1) = \frac{dp}{2(p+2)} < 1$ since $p < p_d$. Therefore, for $\tau \in (0,1)$, the estimate above together with the Sobolev embedding and \eqref{eq-r-10} yields
    \begin{equation}\label{eq-r-36}
    \begin{aligned}
    \tau \|Z_{\tau}^{\phi_2}\|_{\ell^{\infty} \big( 0,T_2;L^{\frac{(2p-1)(p+2)}{p-1}} \big)}^{2p-1}
    &\leq C \tau^{1- \frac{dp}{2(p+2)}} \|Z_{\tau}^{\phi_2}\|_{\ell^{\infty}(0,T_2;L^{p+2})}^{2p-1} \\
    &\leq C \tau^{1- \frac{dp}{2(p+2)}} \|Z_{\tau}^{\phi_2}\|_{\ell^{\infty}(0,T_2;H^1)}^{2p-1}
    \leq C_{d,p} (4\mathbf{C} M)^{2p-1}.
    \end{aligned}
    \end{equation}
Inserting this inequality \eqref{eq-r-36} into \eqref{eq-3-10}, we have
    \begin{equation*}
    \begin{split}
    &\big\| Z_{\tau}^{\phi_1} (n \tau) -Z_{\tau}^{\phi_2} (n \tau) \big\|_{\ell^q (0, T_2; W^{1,r} )} \\
    &\quad\leq  C \| \phi_1 - \phi_2 \|_{H^1 (\mathbb{R}^d)}
     +  C_{d,p} T_2^{\frac{1}{q_0'} - \frac{1}{q_0}} (4\mathbf{C} M)^{2p}
        \big\| Z_{\tau}^{\phi_1} (n \tau) -Z_{\tau}^{\phi_2} (n \tau) \big\|_{\ell^{\infty} (0, T_2; H^1 )}.
    \end{split}
    \end{equation*}
Now, we choose $T_2 = \big[ (2 C_{d,p}) (4\mathbf{C} M)^{2p} \big]^{\frac{q_0 q_0'}{q_0' -q_0}}$. Then, the estimate above with $(q,r)= (\infty, 2)$ yields that
    $$
    \big\| Z_{\tau}^{\phi_1} (n \tau) -Z_{\tau}^{\phi_2} (n \tau) \big\|_{\ell^{\infty} (0, T_2; H^{1} )}
    \leq  2C \| \phi_1 - \phi_2 \|_{H^1 (\mathbb{R}^d)}.
    $$
By inserting this into \eqref{eq-3-10} for general pairs $(q,r)$, we obtain \eqref{eq-K-10}.

\
\

\textbf{Proof of \eqref{thm-7}.}
Assume that $\psi \in H^2 (\mathbb{R}^d)$ with $\|\psi\|_{H^1}\leq M$.
For our aim, it is sufficient to estimate $Z_{\tau}^{\psi}(n \tau) - \Pi_{\tau} u^{\psi}(n \tau)$ instead of $Z_{\tau}^{\psi}(n \tau) - u^{\psi}(n \tau)$, because we have
    $$
    \begin{aligned}
    \big\| u^{\psi}(n \tau) - \Pi_{\tau} u^{\psi}(n \tau) \big\|_{\ell^{q} (0,T_2; W^{1,r})}
    &\leq C\tau^{1/2} \| u^{\psi} (n \tau)\|_{\ell^{q} (0,T_2; W^{2,r})} \\
    &\leq C\tau^{1/2} M_3 = \tau^{1/2} C(d,p,M,\psi)
    \end{aligned}
    $$
thanks to \eqref{eq-2-20} and \eqref{def_M_2}.
By utilizing the Duhamel formulas \eqref{eq-1-8} and \eqref{eq-1-9}, we decompose $Z_{\tau}^{\psi}(n\tau) - \Pi_{\tau}u^{\psi} (n\tau)$ into
    \begin{equation}\label{eq-6-1}
    \begin{split}
    &Z_{\tau}^{\psi}(n \tau) - \Pi_{\tau}u^{\psi} (n \tau) \\
    &\quad=\tau \sum_{k=0}^{n-1} S_{\tau}(n \tau -k \tau) \left( \frac{N(\tau) - I}{\tau} Z_{\tau}^{\psi}(k\tau)
        - \frac{N(\tau) - I}{\tau} \Pi_{\tau} u^{\psi}(k\tau) \right) \\
    &\qquad +\tau \sum_{k=0}^{n-1} S_{\tau}(n \tau -k \tau) \frac{N(\tau) - I}{\tau} \Pi_{\tau} u^{\psi}(k\tau)
        - i\lambda\int_0^{n\tau} S_{\tau}(n\tau -s) |u^{\psi}|^p u^{\psi} (s) ds.
    \end{split}
    \end{equation}
Let $(q,r)$ be an admissible pair. We proceed to find an estimate on the $\ell^{q} (0,T_2 ;W^{1,r})$ norm of $Z_{\tau}^{\psi} - \Pi_{\tau} u^{\psi}$ using decomposition \eqref{eq-6-1}. Firstly, we estimate the first term on the right hand side of \eqref{eq-6-1}.
For this, by applying the Strichartz estimate \eqref{eq-st-6} and Lemma \ref{lem-a-1} in order, we arrive at the following estimate
    $$
    \begin{aligned}
    &\left\| \tau \sum_{k=0}^{n-1} S_{\tau}(n \tau -k \tau) \left( \frac{N(\tau) - I}{\tau} Z_{\tau}^{\psi}(k\tau)
        - \frac{N(\tau) - I}{\tau} \Pi_{\tau} u^{\psi} (k \tau) \right) \right\|_{\ell^{q} (0,T_2; W^{1,r})} \\
    &\quad\leq C \left\| \frac{N(\tau) - I}{\tau} Z_{\tau}^{\psi}(k\tau)
        - \frac{N(\tau) - I}{\tau} \Pi_{\tau} u^{\psi} (k \tau) \right\|_{\ell^{q_0 '} (0,T_2; W^{1,r_0'} )} \\
    &\quad\leq C T_2^{\frac{1}{q_0'} - \frac{1}{q_0}} \| Z_{\tau}^{\psi} - \Pi_{\tau} u^{\psi} \|_{\ell^{\infty} (0,T_2; H^1)} \biggl[\Big( \| Z_{\tau}^{\psi} \|_{\ell^{q_0} (0,T_2; W^{1,r_0})} + \| \Pi_{\tau} u^{\psi} \|_{\ell^{q_0} (0,T_2; W^{1,r_0})} \Big)\biggr. \\
    &\biggl.\qquad \times\Big( \|Z_{\tau}^{\psi}\|_{\ell^{\infty} (0,T_2; H^1)}^{p-1} + \| \Pi_{\tau} u^{\psi} \|_{\ell^{\infty} (0,T_2; H^1)}^{p-1} \Big) + \tau \|\nabla Z_{\tau}^{\psi}\|_{\ell^{q_0}(0,T_2 ;L_x^{r_0})} \Bigl\| ||Z_{\tau}^{\psi}|^{2p-1}\Bigr\|_{\ell^{\infty}(0,T_2; L^{\frac{r_0}{p-1}})}\biggr].
    \end{aligned}
    $$
On the other hand, we recall from Proposition \ref{thm-3} with the $T_2 \leq T_0$ given in \eqref{def_T_*} that $Z_{\tau}^{\psi}$ enjoys the following local stability
    \begin{equation}\label{add_est_1}
    \| Z_{\tau}^{\psi} \|_{\ell^{\infty} (0,T_2; H^1)}  \leq 4\mathbf{C}M \quad\mbox{and}\quad
    \| Z_{\tau}^{\psi} \|_{\ell^{q_0} (0,T_2; W^{1,r_0})} \leq 4\mathbf{C}M.
    \end{equation}
Also, from \eqref{eq-r-36}, we see that
    \begin{equation}\label{add_est_2}
    \tau  \|\nabla Z_{\tau}^{\psi}\|_{\ell^{q_0}(0,T_2;L^{r_0})} \bigl\| ||Z_{\tau}^{\psi}|^{2p-1}\bigr\|_{\ell^{\infty}(0,T_2; L^{\frac{r_0}{p-1}})}
    \leq C_{d,p} (4\mathbf{C}M)^{2p}.
    \end{equation}
Furthermore, by \eqref{eq-2-20'} in Lemma \ref{lem-2-6} and \eqref{eq-c-1} in Theorem \ref{wp_u_H12'} with $\psi_1 = \psi$ and $\psi_2 =0$, we have
    \begin{equation}\label{add_est_3}
    \| \Pi_{\tau} u^{\psi} \|_{\ell^{\infty} (0,T_2; H^1)} + \| \Pi_{\tau} u^{\psi} \|_{\ell^{q_0} (0,T_2; W^{1,r_0})} \leq C_{d,p}M.
    \end{equation}
By applying estimates \eqref{add_est_1}, \eqref{add_est_2} and \eqref{add_est_3}, we get
    $$
    \begin{aligned}
    &\left\| \tau \sum_{k=0}^{n-1} S_{\tau}(n \tau -k \tau) \left( \frac{N(\tau) - I}{\tau} Z_{\tau}^{\psi}(k\tau)
        - \frac{N(\tau) - I}{\tau} \Pi_{\tau} u^{\psi} (k \tau) \right) \right\|_{\ell^{q} (0,T_2; W^{1,r})} \\
    &\qquad\leq C_{d,p} T_2^{\frac{1}{q_0'} - \frac{1}{q_0}} (4\mathbf{C}M)^{2p} \| Z_{\tau}^{\psi} - \Pi_{\tau} u^{\psi} \|_{\ell^{\infty} (0,T_2; H^1)} \\
    &\qquad\leq \frac{1}{2} \| Z_{\tau}^{\psi} - \Pi_{\tau} u^{\psi} \|_{\ell^{\infty} (0,T_2; H^1)}
    \end{aligned}
    $$
provided $\beta_{d,p}>0$ in $T_2$ is small enough in \eqref{def_T_*}. By inserting this estimate into \eqref{eq-6-1}, we obtain
    $$
    \begin{aligned}
    &\big\| Z_{\tau}^{\psi} - \Pi_{\tau} u^{\psi} \big\|_{\ell^{q} (0,T_2; W^{1,r})} \\
    &\quad\leq 2\left\| \tau \sum_{k=0}^{n-1} S_{\tau}(n \tau -k \tau) \frac{N(\tau)-I}{\tau} \Pi_{\tau} u^{\psi} (k \tau)
    - i\lambda\int_0^{n\tau} S_{\tau}(n\tau -s) |u^{\psi}|^p u^{\psi} (s) ds \right\|_{\ell^q (0,T_2; W^{1,r})} .
    \end{aligned}
    $$
On the other hand, one has
    \begin{equation}\label{eq-r-53}
    \begin{split}
    \left\| \tau \sum_{k=0}^{n-1} S_{\tau}(n \tau -k \tau) \frac{N(\tau)-I}{\tau} \Pi_{\tau} u^{\psi} (k \tau)
    - i\lambda\int_0^{n\tau} S_{\tau}(n\tau -s) |u^{\psi}|^p u^{\psi} (s) ds \right\|_{\ell^q (0,T_2; W^{1,r})} \\
    \leq C_{d,p}~ \tau^{1/2} \big( 1+ T_2^{\frac{1}{q_0'}} \big) \big( 1+ M_3^{2p+1} \big),
    \end{split}
    \end{equation}
where the constant $M_3 =M_3(d,p,M,\psi)> 0$ is referred to \eqref{def_M_2}.
The proof of \eqref{eq-r-53} is a minor modification of the proof in Lemma \ref{lem-K1}. Indeed, by applying Lemma \ref{lem-a-1} and \eqref{def_M_2} in Theorem \ref{wp_u_H12'} into the argument of Lemma \ref{lem-K1} instead of using \eqref{def_C_1} in Theorem \ref{wp_u_H12}, we get \eqref{eq-r-53}.

Since $T_2$ is determined by $d,p$ and $M$, we have
    $$
    \big\| Z_{\tau}^{\psi} - \Pi_{\tau} u^{\psi} \big\|_{\ell^{q} (0,T_2; W^{1,r})}
    \leq \tau^{1/2} C(d,p,M,\psi),
    $$
which completes the proof.
\end{proof}

%%%%%%%%%%%%%%%%%%%%%%%%%%%%%%%%%%%%%%%%%%%%%%%%%%%%%%%%%%%%%%%%%%%%%%%%%%%%%%%%%%%%%%%%%%%%%

\section{Global $H^1$ stability of $Z_{\tau}$ for $1\leq p<p_d$.}\label{sec-10}

In this section, we prove Theorem \ref{thm-9'} which is the case of $1\leq p<p_d$. The structure of the proof is to apply the local $H^1$ stability inductively after dividing the interval $(0,T]$ into a set of intervals of the same size. In doing so, the main task is to control the growth of $H^1$ norm of $Z_{\tau}$. %As a preliminary step for that, we shall obtain the local $H^1$ stability of $Z_{\tau}$ with respect to initial data in $H^1 (\mathbb{R}^d)$, and the $H^1$ convergence property of $Z_{\tau}$ with initial data in $H^2 (\mathbb{R}^d)$.\\

\begin{proof}[Proof of Theorem \ref{thm-9'}]
We take an initial data $\phi \in H^1$, and consider a time $T>0$ such that $\sup_{0 \leq t \leq T} \|u(t)\|_{H^1 (\mathbb{R}^d)} < \infty$.
Then, from \eqref{def_C_1} in Theorem \ref{wp_u_H12}, we can find a constant $M_1= M_1 (d,p,T,\phi) \geq 1$ such that
    \begin{equation}\label{eq-5-97}
    \|u\|_{L^{\infty}(0,T;H^1)} + \|u\|_{L^q (0,T;W^{1,r})} \leq \frac{M_1}{2}.
    \end{equation}
Let us take $\beta_{d,p}>0$ in \eqref{def_T_*} and choose $T_2 >0$ as
    \begin{equation}\label{eq-5-97'}
    T_2 = \frac{\beta_{d,p}}{2} M_1^{-\frac{4p(p+2)}{4-(d-2)p}}.
    \end{equation}
If $T< 2T_2$, the stability follows just using the local stability result of Proposition \ref{thm-3}. Thus we may consider only the case $T > 2T_2$. %The case that $\tau > T_2 /2$ is proved easily as in the proof of Theorem \ref{thm-rel}.
\

First we consider the case  $\tau \leq T_2 /2$. As in the proof of Theorem \ref{thm-rel} we choose $R\in (T_2 /2, T_2]$ such that $R/ \tau \in \mathbb{N}$, and to argue an induction, we split $[0,T]$ as
\begin{equation}
\begin{split}
[0,T]&= \cup_{k=0}^{N-1} [kR, (k+1)R) \, \cup \, [NR, T)
\\
&=: \cup_{k=0}^{N-1} I_k \, \cup \, I_N,
\end{split}
\end{equation}
where $N \in \mathbb{N}$ is chosen so that $NR\leq T < (N+1)R$.
(We remark that $N\sim T/T_2$).

For each $k=0,1,\cdots, N-1$, take auxiliary functions $\psi_k \in H^2 (\mathbb{R}^d)$ such that
    \begin{equation}\label{def_psi}
    \| \psi_k - u^{\phi}(kR) \|_{H^1} < \frac{M_1}{(10 C_{d,p})^{N}} \, \left( < \frac{M_1}{2}\right).
    \end{equation}
Here, the constant $C_{d,p} \geq1$ is chosen as a larger one between $C_{d,p}$ given in \eqref{eq-c-1} of Theorem \ref{wp_u_H12'} and that in \eqref{eq-K-10} of Proposition \ref{prop-3-3}. We remark that we can choose a same function $\psi_k$ for any value $\tau \in (0, \bar{\tau})$ with a small $\bar{\tau} = \bar{\tau} (\phi, d,p, M_1)$ since $u$ is continuous in $H^1$ and we may choose $R \in (T_2 -2 \bar{\tau}, T_2]$ for $\tau \in (0, \bar{\tau})$.

Combining \eqref{def_psi} with \eqref{eq-5-97}, we find that
    \begin{equation}\label{eq-5-95}
    \| u(k R) \|_{H^1} \leq \frac{M_1}{2}
    \quad\textrm{and}\quad
    \|\psi_k\|_{H^1}\leq M_1
    \quad\mbox{for all}\quad k=0,\cdots,N -1.
    \end{equation}
Given \eqref{eq-5-95} and that $\psi_k \in H^2 (\mathbb{R}^d)$, we may apply Proposition \ref{prop-3-3} to obtain
    \begin{equation}\label{upper_psi_1}
    \sup_{n\tau \in [0,R]} \big\|Z_{\tau}^{\psi_k} (n\tau) - u^{\psi_k} (n\tau) \big\|_{H^1} \leq \tau^{1/2} C(d,p,M_1,\psi_k),
    \end{equation}
where $C(d,p,M_1,\psi_k)$ denotes the constant determined in \eqref{thm-7}.
%On the other hand, we obtain the following dependence relations:
   % \begin{itemize}
   % \item $\psi_k = \psi_k(d,p,T,T_2,M_1,\phi)$ in Definition \ref{def_psi}.
   % \item $T_2 = T_2(d,p,M_1)$ in \eqref{eq-5-97'}.
   % \item $M_1 = M_1(d,p,T,\phi)$ in \eqref{eq-5-97}.
   % \end{itemize}
Now we set a constant
    \begin{equation}\label{upper_psi_2}
    \mathbf{C}(d,p,T,\phi):=
    \frac{(10 C_{d,p})^{N}}{M_1} \max_{k=0,\cdots,N -1} C(d,p,M_1 ,\psi_k).
    \end{equation}
This is valid from $\psi_k=\psi_k(d,p,T,T_2,M_1,\phi)$, $T_2=T_2(d,p,M_1)$ and $M_1=M_1(d,p,T,\phi)$.
Then, for $\tau < \tau_* := \min\left\{ \Big( \mathbf{C}(d,p, T,\phi) \Big)^{-2}, ~\bar{\tau}\right\}$, we can deduce from
n \eqref{upper_psi_1} and \eqref{upper_psi_2} the following estimate
    \begin{equation}\label{eq-r-11}
    \sup_{n\tau \in [0,R]} \big\|Z_{\tau}^{\psi_k} (n\tau) - u^{\psi_k} (n\tau) \big\|_{H^1} \leq \frac{M_1}{(10 C_{d,p})^N}.
    \end{equation}

%\begin{lem}\label{Def_T*}
%There is a short time $0< T_* < \Big( \mathbf{C}(d,p, T,\phi) \Big)^{-2}$ such that
%    $$
%    \sum_{k=1}^{(T/T_2)-1} \sup_{0\leq t \leq T_*}
%    \big\| u^{\phi}(kT_2) - u^{\phi}(kT_2 -t) \big\|_{H^1} < \frac{M_1}{(10 C_{d,p})^{T/T_2}} .
%    $$
%\end{lem}

%\begin{proof}
%By the continuity of $u^{\phi}(t)$ in $H^1 (\mathbb{R}^d)$ (see Theorem \ref{wp_u_H12}),
%$\|u^{\phi}(kT_2) - u^{\phi}(kT_2 -t)\|_{H^1}$ converges to zero as $t \rightarrow 0^{+}$ for each $k$.
%Since $T/T_2$ is finite and the solution $u^{\phi}(x,t)$ depends only $d,p,\phi$,
%we can find $T_*=T_*(d,p,\phi,T,T_2)>0$ that the estimate holds.
%Also, by the dependence $T_2=T_2(d,p,M_1)$ and $M_1=M_1(d,p,T,\phi)$, we can write $T_*=T_*(d,p,T,\phi)>0$.
%\end{proof}

We shall first prove the theorem for $\tau \in (0, \tau_{*})$. To obtain the stability of $Z_{\tau}^{\phi}$ on $[0,T]$, we use the following estimates with an induction.

\medskip

\noindent \textbf{\emph{Claim}}: For any $k\in\{0,1,\cdots,N \}$, we have
    \begin{equation}\label{eq-5-91}
    \max_{n\tau \in I_{k}} \big\| Z_{\tau}^{\phi}(n\tau)  - u^{\phi} (n\tau) \big\|_{H^1}
    \leq (3 C_{d,p})^{k+1} \frac{M_1}{(10 C_{d,p})^{N}}
    \end{equation}
and
    \begin{equation}\label{eq-5-92}
    \max_{n\tau \in I_{k}} \|Z_{\tau}^{\phi}(n\tau) \|_{H^1} \leq M_1
    \end{equation}
for all $\tau \in (0,\tau_*)$.

\medskip

\noindent We prove this claim by an induction.

\noindent \textbf{Step 1.}
We show that the claim holds for $k=0$.
By the triangle inequality, we have
    $$
    \begin{aligned}
    &\max_{n\tau \in [0,R]} \big\| Z_{\tau}^{\phi}(n\tau)  - u^{\phi} (n\tau) \big\|_{H^1} \\
    &\quad\leq \max_{n\tau \in [0,R]} \Big( \big\| Z_{\tau}^{\phi}(n\tau)  - Z_{\tau}^{\psi_0}(n\tau) \big\|_{H^1} + \big\| Z_{\tau}^{\psi_0}(n\tau) - u^{\psi_0} (n\tau) \big\|_{H^1} + \big\| u^{\psi_0}(n\tau) - u^{\phi}(n\tau) \big\|_{H^1} \Big).
    \end{aligned}
    $$
Given the upper bound of initial data \eqref{eq-5-95}, we may apply \eqref{eq-K-10} in Proposition \ref{prop-3-3}, \eqref{eq-r-11}, and \eqref{eq-c-1} in Theorem \ref{wp_u_H12'} to yield that
    $$
    \begin{aligned}
    \max_{n\tau \in [0,R]} \big\| Z_{\tau}^{\phi}(n\tau)  -u^{\phi} (n\tau) \big\|_{H^1} &\leq C_{d,p} \| \phi - \psi_0 \|_{H^1} + \frac{M_1}{(10 C_{d,p})^{N}} + C_{d,p} \| \phi_0 - \psi \|_{H^1}   \\
    &\leq 2C_{d,p} \frac{M_1}{(10 C_{d,p})^{N}} + \frac{M_1}{(10 C_{d,p})^{N}} \\
    &\leq 3C_{d,p} \frac{M_1}{(10 C_{d,p})^{N}} .
    \end{aligned}
    $$
By combining this with \eqref{eq-5-97}, we obtain that
    $$
    \begin{aligned}
    \max_{n\tau \in [0,R]} \|Z_{\tau}^{\phi}(n\tau) \|_{H^1}
    &\leq \max_{n\tau \in [0,R]}  \| u^{\phi}(n\tau) \|_{H^1}
    + \max_{n\tau \in [0,R]} \big\| u^{\phi} (n\tau) - Z_{\tau}^{\phi}(n\tau) \big\|_{H^1} \\
    &\leq \frac{M_1}{2}  +  3C_{d,p} \frac{M_1}{(10 C_{d,p})^{N}}
    \leq M_1 .
    \end{aligned}
    $$
Therefore, \eqref{eq-5-92} and \eqref{eq-5-91} hold for $k=0$.

\medskip

\noindent \textbf{Step 2.} Suppose that \eqref{eq-5-91} and \eqref{eq-5-92} hold for some  $k-1 \in \{0,1,\cdots, N -1\}$.
Then we aim to show that \eqref{eq-5-91} and \eqref{eq-5-92} also hold for the $(k+1)$-step. We have
    \begin{equation}\label{eq-5-96}
    \max_{n\tau \in I_{k}} \big\| Z_{\tau}^{\phi}(n\tau) - u^{\phi}(n\tau) \big\|_{H^1}
    =\max_{n\tau \in [0,R]} \big\| Z_{\tau}^{\phi}(n \tau+ n_k \tau) - u^{\phi} (n\tau + n_k \tau) \big\|_{H^1},
    \end{equation}
    where for $k=N$, we assume that the interval $[0,R]$ is regarded as $[0, T-NR]$,  abusing a notation for the simplicity.
By the triangle inequality, we estimate the right hand side as
    \begin{equation}\label{eq-r-2}
    \begin{split}
    &\max_{n\tau \in [0,R]} \big\| Z_{\tau}^{\phi}(n \tau+ n_k \tau) - u^{\phi} (n\tau + n_k \tau) \big\|_{H^1} \\
    &~\leq \max_{n\tau \in [0,R]} \Bigl(\big\|Z_{\tau}^{\phi}(n\tau + n_k \tau) - Z_{\tau}^{\psi_k} (n\tau) \big\|_{H^1} \\
    &\qquad\qquad\qquad\qquad + \big\|Z_{\tau}^{\psi_k}(n\tau) - u^{\psi_k} (n\tau)\big\|_{H^1} + \big\|u^{\psi_k} (n\tau) - u^{\phi}(n_k \tau + n\tau)\big\|_{H^1}\Bigr).
    \end{split}
    \end{equation}
Given the estimates of initial data \eqref{eq-5-95} and \eqref{eq-5-92} with $k-1$, we may apply Lemma \ref{prop-3-3} to yield that
    $$
    \begin{aligned}
   \max_{n\tau \in [0,R]}  \big\|Z_{\tau}^{\phi} (n\tau +n_k \tau)  - Z_{\tau}^{\psi_k} (n\tau) \big\|_{H^1}& = \max_{n\tau \in [0,R]} \big\|Z_{\tau}^{Z_{\tau}^{\phi}(n_k \tau)} (n \tau) - Z_{\tau}^{\psi_k} (n\tau) \big\|_{H^1}
    \\
    & \leq C_{d,p} \big\|Z_{\tau}^{\phi}(n_k \tau) - \psi_k \big\|_{H^1}
    \\
    & \leq C_{d,p} \Bigl( \big\|Z_{\tau}^{\phi}(n_k \tau) - u^{\phi}(n_k \tau) \big\|_{H^1} + \big\|u^{\phi}(n_k \tau) - \psi_k \big\|_{H^1} \Bigr).
    \end{aligned}
    $$
Also, by Theorem \ref{wp_u_H12'}, we have
    $$
    \begin{aligned}
   \sup_{n\tau \in [0,R]} \big\|  u^{\psi_k} (n\tau) - u^{\phi} (n_k \tau +n\tau) \big\|_{H^1} &= \sup_{n\tau \in [0,R]} \big\|  u^{\psi_k} (n\tau) - u^{u^{\phi} (n_k \tau)} (n\tau) \big\|_{H^1} \\
    &\leq C_{d,p} \big\|\psi_k - u^{\phi}(n_k \tau) \big\|_{H^1}
    \end{aligned}
    $$
By inserting the estimates above into \eqref{eq-r-2} and applying \eqref{eq-r-11}, we arrive at the following estimate
    $$
    \begin{aligned}
    &\sup_{n \tau \in [0,R]} \big\|Z_{\tau}^{\phi}(n \tau  + n_k \tau) - u^{\phi} (n_k \tau  + n \tau) \big\|_{H^1} \\
    &\quad \leq C_{d,p} \big\|Z_{\tau}^{\phi}(n_k \tau) - u^{\phi}(n_k \tau) \big\|_{H^1}
    + 2C_{d,p} \Bigl( \big\|\psi_k -u^{\phi}(n_k \tau) \big\|_{H^1}\Bigr)
    + 2C_{d,p} \frac{M_1}{(10 C_{d,p})^{N}} \\
    &\quad\leq C_{d,p} \big\|Z_{\tau}^{\phi}(n_k \tau) - u^{\phi}(n_k \tau) \big\|_{H^1}  + 6C_{d,p} \frac{M_1}{(10 C_{d,p})^{N}},
    \end{aligned}
    $$
where we used \eqref{def_psi} for the second inequality.
Now we apply the assumption \eqref{eq-5-91} with the $k$-step along with \eqref{eq-5-92} in the above inequality. Then we get
    $$
    \begin{aligned}
    &\sup_{n \tau \in I_{k}} \big\|Z_{\tau}^{\phi}(n \tau  + n_k \tau) - u^{\phi} (n_k \tau  + n \tau) \big\|_{H^1} \\
    &\quad\leq C_{d,p} \bigg( (3C_{d,p})^{k+1} \frac{M_1}{(10 C_{d,p})^N} \bigg)  + 6C_{d,p} \frac{M_1}{(10 C_{d,p})^{N}} \\
    &\quad \leq (3C_{d,p})^{k+2} \frac{M_1}{(10 C_{d,p})^{N}}.
    \end{aligned}
    $$
Thus, we obtain the estimate \eqref{eq-5-91} for the $k$-step. This, together with \eqref{eq-5-97}, implies \eqref{eq-5-92}. Hence the claim is proved, and so  we have
    \begin{equation}\label{eq-r-93}
    \sup_{n \tau \in [0,T]} \big\|Z_{\tau}^{\phi}(n\tau)\big\|_{H^1} \leq M_1
    \quad\mbox{for all}\quad \tau \in (0, \tau_*),
    \end{equation}
which proves the theorem for $(q,r) = (\infty,2)$. Given estimate \eqref{eq-r-93}, the stability of $Z_{\tau}$ for general pair follows from the local $H^1$ stability result of Proposition \ref{thm-3} (see also Remark \ref{rem-r-4}). The proof is finished for $\tau \in (0, \tau_*)$.

Now, it only remains to consider the case $\tau_* \leq \tau < 1$. By H\"older's inequality and Lemma \ref{lem-2-6}, we have
    \begin{equation*}
    \begin{aligned}
    \|Z_{\tau}^{\phi} (n \tau)\|_{\ell^{q} (n\tau \in [0,T]; W^{1,r})}
    &\leq \Big( \frac{T+1}{\tau}\Big)^{\frac{1}{q}} \sup_{n\tau \in [0,T]} \|Z_{\tau}^{\phi}(n\tau)\|_{W^{1,r}} \\
    &\leq C\Big( \frac{T+1}{\tau}\Big)^{\frac{1}{q}} \big(1+ \tau^{-\frac{1}{2}}\big) \tau^{\frac{d}{2} \left( \frac{1}{r} - \frac{1}{2}\right)} \sup_{n\tau \in [0,T]} \|Z_{\tau}^{\phi}(n\tau)\|_{L^2} \\
    &\leq 2C (T+1)^{\frac{1}{2}} \big(\frac{1}{\tau_*}\big)^{\frac{1}{2}} \sup_{n\tau \in [0,T]} \|Z_{\tau}^{\phi}(n\tau)\|_{L^2}.
    \end{aligned}
    \end{equation*}
where $(q,r)$ is any admissible pair.
On the other hand, we know that the $L^2$ norm of $Z_{\tau}$ does not increase by the definition \eqref{eq-r-30}, i.e.,
    \begin{equation*}
    \|Z_{\tau}^{\phi}(n\tau)\|_{L^2}\leq \| \phi \|_{L^2} \quad\mbox{for all}\quad n \in \mathbb{N}.
    \end{equation*}
Thus we have
    \begin{equation*}
    \begin{aligned}
    \|Z_{\tau}^{\phi} (n \tau)\|_{\ell^{q} (n\tau \in [0,T]; W^{1,r})}
    \leq 2C (T+1)^{\frac{1}{2}} \big( \frac{1}{\tau_*}\big)^{\frac{1}{2}} \| \phi \|_{L^2}
    \leq C(d,p,T,\phi),
    \end{aligned}
    \end{equation*}
which proves the theorem for $\tau_* \leq \tau <1$.
\end{proof}

%%%%%%%%%%%%%%%%%%%%%%%%%%%%%%%%%%%%%%%%%%%%%%%%%%%%%%%%%%%%%%%%%%%%%%%%%%%%%%%%%%%%%%%%%%%%

\section*{Acknowledgment}
This work was supported by Incheon National University Research Grant in 2017.

%%%%%%%%%%%%%%%%%%%%%%%%%%%%%%%%%%%%%%%%%%%%%%%%%%%%%%%%%%%%%%%%%%%%%%%%%%%%%%%%%%%%%%%%%%%%%%%%%%%%%

\end{document}